\theoremstyle{definition}
\newtheorem{thm}{Theorem}
\newtheorem{lem}[thm]{Lemma}
\newtheorem{assump}{Assumption}
\newtheorem{ex}{Example}
\newtheorem{rem}{Remark}
\numberwithin{equation}{section}
\begin{document}

\title{Error control of a numerical formula for the Fourier transform 
by Ooura's continuous Euler transform and fractional FFT}

\author[FUN]{Ken'ichiro Tanaka}
\ead{ketanaka@fun.ac.jp}
\address[FUN]{School of Systems Information Science, Future University Hakodate, \\
116-2 Kamedanakano-cho, Hakodate, Hokkaido, 041-8655, Japan}

\date{\today}

\begin{abstract}
In this paper, we consider a method for fast numerical computation of 
the Fourier transform of a slowly decaying function
with given accuracy in given ranges of the frequency.
In these decades, 
some useful formulas for the Fourier transform are 
proposed to recover difficulty of the computation due to 
the slow decay and the oscillation of the integrand. 
In particular, Ooura proposed formulas 
with continuous Euler transformation and showed their effectiveness. 
It is, however, also reported that errors of them become large 
outside some ranges of the frequency.
Then, 
for an illustrating representative of the formulas, 
we choose parameters in the formula based on its error analysis
to compute the Fourier transform
with given accuracy in given ranges of the frequency. 
Furthermore, combining the formula and fractional FFT, 
a generalization of the fast Fourier transform (FFT), 
we execute the computation in the same order of computation time as the FFT.
\end{abstract}

\begin{keyword}
error control \sep Fourier transform \sep continuous Euler transform \sep fractional FFT
\end{keyword}

\maketitle

\section{Introduction}

In this paper, we consider a method for fast numerical computation of 
the Fourier transform 
\begin{align}
F(\omega) 
= \int_{-\infty}^{\infty} f(x)\, \mathrm{e}^{-\mathrm{i}\, \omega\, x}\, \mathrm{d}x
\label{eq:FT}
\end{align}
with given accuracy in given ranges of the frequency $\omega$.
In particular, we mainly focus on a function $f(x)$ in \eqref{eq:FT} 
with slow decay as $x \to \pm \infty$. 
The Fourier transform is a fundamental tool in various areas such as 
optics, signal processing, probability theory, 
and theoretical or numerical methods for differential equations \cite{bib:DuffyFTPDE2004}, etc.
Moreover, in mathematical finance, 
option pricing by the Fourier transform is an active research topic \cite{bib:Kwok_etal_2012}.
Particularly in this area, fast numerical computation of the Fourier transform is required. 
On the other hand, 
it is usually needed to compute 
the values of $F(\omega)$ in~\eqref{eq:FT}
for many $\omega$'s in some ranges.
Then, it is preferable to use some methods to accelerate the computation 
such as the fast Fourier transform (FFT).

Since the Fourier transform~\eqref{eq:FT} is a definite integral for fixed $\omega$, 
we may apply some standard quadrature formulas to~\eqref{eq:FT} 
such as Newton-Cotes type or Gauss type formulas. 
These formulas can yield accurate approximate values for $F(\omega)$
if the function $f(x)$ in~\eqref{eq:FT} is sufficiently smooth and decays rapidly as $x \to \pm \infty$.
For $f$ with slow decay, however, less accurate values are generated by these formulas. 
Moreover, we may also use double exponential (DE) formulas by 
Takahasi and Mori~\cite{bib:TakahasiMoriDE1974}
for the computation. 
The DE formulas are formed by some special variable transformations $\phi$ as 
\begin{align}
\int g(x)\, \mathrm{d}x \approx h \sum_{n=-N_{-}}^{N_{+}} g(\phi(nh))\, \phi'(nh)
\label{eq:AbstDE}
\end{align}
for a function $g$, where $h > 0$ is a width between sampling points.
The transformations $\phi$ are called double exponential (DE) transformations. 
The DE formulas are very accurate for a reasonably wide class of 
definite integrals~\cite{bib:KTanakaFuncClassDE2009}.
It is, however, known that the DE formulas 
do not yield so accurate results 
for oscillatory integrals such as~\eqref{eq:FT} with slowly decaying~$f$.

Several highly accurate formulas for the Fourier transform 
of slowly decaying functions are proposed by 
Ooura and Mori~\cite{bib:OouraMoriDE-FT1991}\cite{bib:OouraMoriDE-FT1999} and 
Ooura~\cite{bib:OouraDE-FT2005}.
In~\cite{bib:OouraMoriDE-FT1991}\cite{bib:OouraMoriDE-FT1999}, 
DE formulas specialized for oscillatory integrals are proposed. 
In these formulas, parameters such as $h$ in \eqref{eq:AbstDE} 
etc.~depend on the frequency $\omega$. 
Then, to compute $F(\omega)$ for another $\omega$, 
we need to change the parameters in these formulas.
In \cite{bib:OouraDE-FT2005}, 
another DE formula is proposed to compute $F(\omega)$ 
for a certain range of $\omega$ with constant parameters in the formula. 
This formula, however, has some restriction for the setting of the range, 
and direct application of the FFT to the formula is not straightforward. 
On the other hand, 
Ooura~\cite{bib:OouraEuler2001}\cite{bib:OouraEulerGen2003}
proposed other useful formulas
\begin{align}
F(\omega)
& \approx 
h \sum_{n=-N_{-}}^{N_{+}} w(|nh|)\, f(nh)\, \mathrm{e}^{-\mathrm{i}\, \omega\, nh}, 
\label{eq:TargetFormula_pre}
\end{align}
using continuous Euler transformations $w$ with some parameters. 
Accuracy of these formulas and 
applicability of the FFT to them are already shown. 
It is, however, also reported that errors of them become large 
for $\omega$ with large $| \omega |$ or 
for $\omega$ around points of discontinuity of $F$
when $h$ and the parameters in $w$ are independent of $\omega$.

In this paper, 
we focus on the formula \eqref{eq:TargetFormula_pre} in~\cite{bib:OouraEuler2001} 
with $w$ defined by~\eqref{eq:def_w} later. 
Then, 
based on error analysis of the formula, 
we find appropriate setting of the parameters in it
to compute the Fourier transform $F(\omega)$
with given accuracy in given ranges of the frequency $\omega$.
Furthermore, 
combining the formula and fractional FFT, a generalization of the FFT, 
we show that the computation can be done 
in the same order of computation time as the FFT.

The rest of this paper is organized as follows.
In Section~\ref{sec:Formula}, we describe the formula with the continuous Euler transform.
In Section~\ref{sec:ErrCont}, we investigate error of the formula, 
show appropriate setting of the parameters, 
and present an error bound of the formula under the setting.
In Section~\ref{sec:Num}, we explain the fractional FFT and show some numerical examples.
Proofs of lemmas for the error analysis are shown in Section~\ref{sec:Proofs}. 
Finally, we conclude this paper by Section~\ref{sec:Concl}.



\section{Numerical formula by Ooura's continuous Euler transform}
\label{sec:Formula}

Let $w(x; p, q)$ be defined by
\begin{align}
w(x; p, q) = \frac{1}{2} \mathop{\mathrm{erfc}} \left( \frac{x}{p} - q \right), 
\label{eq:def_w}
\end{align}
where 
\begin{align}
\mathop{\mathrm{erfc}}(x) = \frac{2}{\sqrt{\pi}} \int_{x}^{\infty} \exp(-t^{2})\, \mathrm{d}t.
\end{align}
Ooura \cite{bib:OouraEuler2001} introduced a continuous Euler transform of $F$ in~\eqref{eq:FT} 
defined by
\begin{align}
F_{w}(\omega) = \int_{-\infty}^{\infty} w(|x|; p, q) f(x)\, \mathrm{e}^{-\mathrm{i}\, \omega\, x} \mathrm{d}x.
\label{eq:F_w}
\end{align}
As shown later by Lemma~\ref{lem:E_w}, 
this function $F_{w}$ approximates $F$ on some assumptions. 
Then, applying the trapezoidal formula to the integral in~\eqref{eq:F_w} yields a numerical formula:
\begin{align}
F(\omega)
& \approx 
h \sum_{n=-N-1}^{N} w(|nh|; p, q) f(nh)\, \mathrm{e}^{-\mathrm{i}\, \omega\, nh}, 
\label{eq:TargetFormula}
\end{align}
where $h$ is a positive real number and $N$ is a positive integer. 
Using the approximation~\eqref{eq:TargetFormula}, 
we can numerically obtain the function $F$  
on a given interval $[-\omega_{u}, \omega_{u}]\ (\omega_{u} > 0)$. 
Namely, setting
\begin{align}
\tilde{h} = \omega_{u}/(N+1),
\label{eq:tilde_h}
\end{align}
we may compute the RHS of~\eqref{eq:TargetFormula} 
for $\omega = -(N+1)\, \tilde{h},\ldots , N \tilde{h}$, i.e., 
\begin{align}
h \sum_{n=-N-1}^{N} w(|nh|; p, q) f(nh)\, \mathrm{e}^{-\mathrm{i}\, m n h \tilde{h}} \qquad
(m = -N-1,\ldots, N).
\label{eq:TargetComputation}
\end{align}
To compute the values~\eqref{eq:TargetComputation} with given accuracy, 
we need to choose the parameters $p$, $q$, $h$ and $N$ appropriately. 
We give such choice of them in Section~\ref{sec:ErrCont}.

\section{Error control of the numerical formula}
\label{sec:ErrCont}

\subsection{Error estimate}

We begin with error estimate of the approximation~\eqref{eq:TargetFormula} 
to find appropriate choice of the parameters $p$, $q$, $h$ and $N$. 
Here, we introduce the following notations:
\begin{align}
F_{w}^{(\infty, h)}(\omega)
& =
h \sum_{n=-\infty}^{\infty} w(|nh|; p, q) f(nh)\, \mathrm{e}^{-\mathrm{i}\, \omega\, nh}, 
\label{eq:F_w_infty_h} \\
F_{w}^{(N, h)}(\omega)
& =
h \sum_{n=-N-1}^{N} w(|nh|; p, q) f(nh)\, \mathrm{e}^{-\mathrm{i}\, \omega\, nh}.
\label{eq:F_w_N_h}
\end{align}
Then, the error $ | F(\omega) - F_{w}^{(N, h)}(\omega) | $ can be bounded as follows:
\begin{align}
| F(\omega) - F_{w}^{(N, h)}(\omega) | 
\leq 
E_{w}(\omega) + E_{w}^{(\infty, h)}(\omega) + E_{w}^{(N, h)}(\omega),
\label{eq:PartTotalErr}
\end{align}
where
\begin{align}
E_{w}(\omega) &= | F(\omega) - F_{w}(\omega) |, \label{eq:Err1} \\
E_{w}^{(\infty, h)}(\omega) &= | F_{w}(\omega) - F_{w}^{(\infty, h)}(\omega) |, \label{eq:Err2} \\
E_{w}^{(N, h)}(\omega) &= | F_{w}^{(\infty, h)}(\omega) - F_{w}^{(N, h)}(\omega) |. \label{eq:Err3} 
\end{align}
To estimate these errors, we consider the domains
\begin{align}
\mathcal{T}_{\alpha} & = 
\{ z \in \mathbf{C} \mid 
| \mathrm{arg}\, z | < \mathop{\mathrm{arctan}} \alpha 
\text{ or } 
| \pi - \mathrm{arg}\, z | < \mathop{\mathrm{arctan}} \alpha \}, \\
\mathcal{D}_{d} &= \{ z \in \mathbf{C} \mid | \mathrm{Im}\, z | < d \} 
\end{align}
for  $0 < \alpha < 1$ and $d > 0$, 
and the following assumptions for $f$:
\begin{assump}
\label{assump:f_anal_etc}
The function $f$ is analytic on $\mathcal{T}_{\alpha}$, 
$| f(z) | \leq M$ for any $z \in \mathcal{T}_{\alpha}$, and
\begin{align}
\lim_{R \to \infty} \max_{-\alpha \leq \beta \leq \alpha}
\left|
f(\pm R + \mathrm{i}\, \beta\, R)
\right|
= 0.
\end{align}
\end{assump}
\begin{assump}
\label{assump:f_L^{2}}
The function $f$ is analytic on $\mathcal{D}_{d}$,
$| f(z) | \leq M$ for any $z \in \mathcal{D}_{d}$,
and belongs to $L^{2}(\mathbf{R})$, i.e., $f$ is square integrable on $\mathbf{R}$.
\end{assump}

First, an estimate of $E_{w}(\omega)$ is given by Lemma~\ref{lem:E_w} below. 
We omit the proof of this lemma because 
it can be shown in the same manner as Ooura~\cite[Theorem 2]{bib:OouraEuler2001}.

\begin{lem}
\label{lem:E_w}
On Assumption~\ref{assump:f_anal_etc}, for arbitrary $\alpha'$ 
with $0 < \alpha'\leq \alpha$, we have
\begin{align}
E_{w}(\omega)
\leq 
M \sqrt{1 + \alpha'^{2}} 
\left[
\frac{\sqrt{\pi} p}{\sqrt{1 - \alpha'^{2}}} 
\exp\left[ -q^{2} \left\{ 1 - \left( \frac{\alpha' p}{2q} |\omega| - 1 \right)^{2}/(1 - \alpha'^{2}) \right\} \right]
+
\frac{2}{\omega \alpha'}\, \mathrm{e}^{- |\omega|\, \alpha'\, p\, q}
\right]. 
\label{eq:ErrEst_final}
\end{align}
\end{lem}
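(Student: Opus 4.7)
The plan is to express $E_w(\omega)$ as the modulus of an integral of $(1 - w(|x|; p, q))\, f(x)\, \mathrm{e}^{-\mathrm{i}\omega x}$ over $\mathbf{R}$, deform each half of this integral into the sector $\mathcal{T}_{\alpha'}$ where $f$ is controlled by $M$, and then carefully estimate the resulting contour integral. Since $|x|$ is not analytic at the origin, I first split the integral at $x = 0$, writing $E_w(\omega) \le |I_+| + |I_-|$ where $I_+ = \int_0^\infty (1 - w(x; p, q))\, f(x)\, \mathrm{e}^{-\mathrm{i}\omega x}\, \mathrm{d}x$ and $I_-$ is defined analogously on the negative half line. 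On each half the integrand extends analytically into $\mathcal{T}_{\alpha'} \subset \mathcal{T}_\alpha$, since $w(\,\cdot\,; p, q)$ is entire and $f$ is analytic on $\mathcal{T}_\alpha$ by Assumption~\ref{assump:f_anal_etc}.

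Assuming first that $\omega > 0$ (the opposite sign is handled by mirror rotating), I invoke Cauchy's theorem to deform the contour of $I_+$ to the ray $z = t(1 - \mathrm{i}\alpha')$, $t \ge 0$, and that of $I_-$ to the opposite ray $z = -t(1 + \mathrm{i}\alpha')$. Justifying that the closing arcs at infinity contribute nothing is the most delicate technical point; it should follow from the decay hypothesis in Assumption~\ref{assump:f_anal_etc} combined with the exponential decay of $|\mathrm{e}^{-\mathrm{i}\omega z}|$ off the real axis. On the rotated contour we have $|f(z)| \le M$, $|\mathrm{e}^{-\mathrm{i}\omega z}| = \mathrm{e}^{-|\omega|\alpha' t}$, and $|\mathrm{d}z| = \sqrt{1 + \alpha'^{2}}\, \mathrm{d}t$, reducing the task to estimating $\int_0^\infty |1 - w(t(1 \mp \mathrm{i}\alpha'); p, q)|\, \mathrm{e}^{-|\omega|\alpha' t}\, \mathrm{d}t$.

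The central estimate is the classical bound $|\mathop{\mathrm{erfc}}(u)| \le \mathrm{e}^{-\mathop{\mathrm{Re}}(u^2)}$, valid for $\mathop{\mathrm{Re}}(u) \ge 0$ and derivable from $\mathop{\mathrm{erfc}}(u) = (2/\sqrt{\pi}) \int_0^\infty \mathrm{e}^{-(u + r)^2}\, \mathrm{d}r$. Writing $u(t) = (q - t/p) \pm \mathrm{i}\alpha' t/p$, I split the integration at $t = pq$, where $\mathop{\mathrm{Re}}(u)$ changes sign. For $t \in [0, pq]$ the bound applies directly and yields $|1 - w| \le \frac{1}{2} \mathrm{e}^{-(q - t/p)^2 + (\alpha' t/p)^2}$; for $t > pq$ I use the reflection $\mathop{\mathrm{erfc}}(u) = 2 - \mathop{\mathrm{erfc}}(-u)$ together with the same bound applied to $\mathop{\mathrm{erfc}}(-u)$, obtaining $|1 - w| \le 1 + \frac{1}{2} \mathrm{e}^{-(q - t/p)^2 + (\alpha' t/p)^2}$.

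After these estimates the integrand splits naturally into a Gaussian-in-$t$ part on all of $[0, \infty)$ plus a constant tail on $[pq, \infty)$. The tail contributes $\int_{pq}^\infty \mathrm{e}^{-|\omega|\alpha' t}\, \mathrm{d}t = \mathrm{e}^{-|\omega|\alpha' p q}/(|\omega|\alpha')$, producing the second term of \eqref{eq:ErrEst_final} after multiplication by $2 M \sqrt{1 + \alpha'^2}$ (the factor of $2$ coming from the two halves). For the Gaussian part I would substitute $s = t/p$, complete the square as a quadratic in $s$, and extend the integration to $\mathbf{R}$; the maximum of the resulting exponent works out to exactly $-q^2 \bigl\{1 - (\alpha' p |\omega|/(2q) - 1)^2/(1 - \alpha'^2)\bigr\}$, and the Gaussian integral contributes $\sqrt{\pi/(1 - \alpha'^2)}$, producing the first term. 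The main obstacle beyond justifying the contour deformation will be keeping the arithmetic tidy so that the sign of $\omega$ enters the final bound only through $|\omega|$, as stated.
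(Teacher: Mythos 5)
Your argument is correct and is essentially the intended one: the paper omits the proof of this lemma, deferring to Ooura's Theorem~2, and your route (split at $x=0$, rotate each half-line contour into $\mathcal{T}_{\alpha'}$ using the decay hypothesis of Assumption~\ref{assump:f_anal_etc} to kill the arcs, then bound $1-w=\tfrac{1}{2}\mathop{\mathrm{erfc}}(q-z/p)$ via $|\mathop{\mathrm{erfc}}(u)|\leq \mathrm{e}^{-\mathrm{Re}(u^{2})}$ for $\mathrm{Re}\,u\geq 0$ together with the reflection formula past $t=pq$) is exactly that argument. I checked the completion of the square and the tail integral: they reproduce the exponent $-q^{2}\{1-(\alpha' p|\omega|/(2q)-1)^{2}/(1-\alpha'^{2})\}$, the prefactor $\sqrt{\pi}\,p/\sqrt{1-\alpha'^{2}}$, and the $\frac{2}{|\omega|\alpha'}\mathrm{e}^{-|\omega|\alpha' pq}$ term precisely, so the proposal is sound.
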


Next, estimates of $E_{w}^{(\infty, h)}(\omega)$ and $E_{w}^{(N, h)}(\omega)$
are given by Lemmas~\ref{lem:DiscErr} and~\ref{lem:TrunErr} below, respectively.
Proofs of them are presented in Section~\ref{sec:Proofs}.

\begin{lem}
\label{lem:DiscErr}
On Assumption~\ref{assump:f_L^{2}}, 
for $\omega$ with $| \omega | \leq \pi / h$, 
we have
\begin{align}
E_{w}^{(\infty, h)}(\omega) 
\leq
\frac{2\, C_{M, p, q, d}}{1 - \exp(- 2\, \pi\, d/h)}
\exp \left( - \frac{\pi\, d}{h} \right), 
\label{eq:DiscErr}
\end{align}
where
\begin{align}
C_{M, p, q, d}
=
M
\left(
\frac{\sqrt{\pi}}{2} + q
\right)\, p\, \exp\{ (d / p)^{2} \}.
\label{eq:DiscErrConst}
\end{align}
\end{lem}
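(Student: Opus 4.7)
The quantity $E_w^{(\infty,h)}(\omega)$ is the full-line trapezoidal-rule error for the integrand $g(x) = w(|x|;p,q)\,f(x)\,\mathrm{e}^{-\mathrm{i}\omega x}$, and the target bound $\mathrm{e}^{-\pi d/h}/(1 - \mathrm{e}^{-2\pi d/h})$ has exactly the shape of a Stenger-type contour-shift estimate in a strip of analyticity of width $d$. The rate is $\mathrm{e}^{-\pi d/h}$ rather than the usual $\mathrm{e}^{-2\pi d/h}$ because the oscillatory factor $\mathrm{e}^{-\mathrm{i}\omega z}$ contributes $|\mathrm{e}^{-\mathrm{i}\omega z}| \le \mathrm{e}^{|\omega|d} \le \mathrm{e}^{\pi d/h}$ on the shifted contour, using the hypothesis $|\omega| \le \pi/h$. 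The sole obstruction is that $g$ fails to be analytic at $x = 0$ due to the modulus.

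\textbf{Splitting and half-line reduction.} I introduce $g_\pm(z) = w(\pm z;p,q)\,f(z)\,\mathrm{e}^{-\mathrm{i}\omega z}$; since $w(\cdot;p,q)$ is entire and $f$ is analytic on $\mathcal{D}_d$ by Assumption~\ref{assump:f_L^{2}}, each $g_\pm$ is analytic on the full strip. Because $g(x) = g_\pm(x)$ for $\pm x \ge 0$ and $g_+(0) = g_-(0)$, the target error decomposes as $|F_w(\omega) - F_w^{(\infty,h)}(\omega)| \le |E_+| + |E_-|$ with
\begin{align}
E_\pm = \int_{\mathbf{R}_\pm} g_\pm(x)\,\mathrm{d}x - \tfrac{h}{2}g_\pm(0) - h\sum_{n=1}^{\infty} g_\pm(\pm nh),
\end{align}
each $E_\pm$ being the half-line trapezoidal error with half-weight at the endpoint for a function analytic on $\mathcal{D}_d$.

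\textbf{Contour shift and estimation.} For $E_+$ I apply the residue theorem to $g_+(z)/(\mathrm{e}^{2\pi\mathrm{i}z/h} - 1)$ on a rectangle with horizontal sides at $\mathrm{Im}\,z = \pm d'$ (with $0 < d' < d$) and small indentations around the boundary poles $z = nh$, $n \ge 0$. The corner quarter-arc at $z = 0$ and interior semi-arcs at $z = nh$ for $n \ge 1$ reproduce the trapezoidal weights $\tfrac{h}{2}g_+(0)$ and $h g_+(nh)$ exactly, while the bulk contribution comes from the horizontal at $\mathrm{Im}\,z = -d'$ (the side on which the residue kernel decays), on which I use
\begin{align}
\bigl|(\mathrm{e}^{2\pi\mathrm{i}z/h} - 1)^{-1}\bigr| \le \frac{\mathrm{e}^{-2\pi d'/h}}{1 - \mathrm{e}^{-2\pi d'/h}}, \qquad |\mathrm{e}^{-\mathrm{i}\omega z}| \le \mathrm{e}^{\pi d'/h}, \qquad |f(z)| \le M,
\end{align}
together with the key $w$-estimate
\begin{align}
|w(z;p,q)| \le \mathrm{e}^{(d'/p)^2} \cdot \tfrac{1}{2}\mathrm{erfc}(\mathrm{Re}(z)/p - q)
\end{align}
derived from $w(z;p,q) = \pi^{-1/2}\int_0^\infty \mathrm{e}^{-(z/p - q + s)^2}\,\mathrm{d}s$ and $|\mathrm{e}^{-(a+\mathrm{i}b)^2}| = \mathrm{e}^{-a^2 + b^2}$. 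Integrating the erfc bound via the antiderivative $x\,\mathrm{erfc}(x) - \pi^{-1/2}\mathrm{e}^{-x^2}$ gives $\int_0^\infty \tfrac{1}{2}\mathrm{erfc}(x/p-q)\,\mathrm{d}x \le p(q + \sqrt\pi/2)$; multiplying all factors and using $\mathrm{e}^{\pi d'/h}\cdot\mathrm{e}^{-2\pi d'/h} = \mathrm{e}^{-\pi d'/h}$ yields a horizontal contribution of at most $C_{M,p,q,d'}\,\mathrm{e}^{-\pi d'/h}/(1 - \mathrm{e}^{-2\pi d'/h})$. A mirror argument in the upper half-strip produces the same bound for $|E_-|$, and adding the two and letting $d' \uparrow d$ delivers the factor $2\,C_{M,p,q,d}$ in the stated bound.

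\textbf{Main obstacle.} The delicate point is absorbing the contributions from the ``non-decaying'' side of each rectangle (the horizontal at $\mathrm{Im}\,z = +d'$ for $E_+$, whose kernel $(\mathrm{e}^{2\pi\mathrm{i}z/h} - 1)^{-1}$ tends to $-1$ rather than to $0$) and from the short vertical segment of length $d'$ along the imaginary axis where the two half-line contours meet. On the ``wrong'' horizontal I will use the identity $(\mathrm{e}^{2\pi\mathrm{i}z/h} - 1)^{-1} = -1 + (1 - \mathrm{e}^{-2\pi\mathrm{i}z/h})^{-1}$: the $-1$ piece combines with a Cauchy-type shift to recover $\int_0^\infty g_+\,\mathrm{d}x$ (which cancels out of $E_+$), leaving only the exponentially small remainder from the second term. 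For the vertical segment, the analogous contribution from $E_-$ lives on the same imaginary segment; the two combine to produce an integrand proportional to $w(\mathrm{i}t;p,q) - w(-\mathrm{i}t;p,q) = O(t)$ as $t \to 0$ (since $w$ is entire), so the apparent $1/t$ singularity of the residue kernel cancels and what remains is of lower order than the horizontal piece. Verifying this cancellation cleanly and ensuring the bookkeeping respects the exact constant $C_{M,p,q,d}$ is the main technical work.
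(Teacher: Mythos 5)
Your route is genuinely different from the paper's. The paper never splits the sum at the origin: it replaces $w^{\mathrm{sym}}(z;p,q)$ by its Gaussian mollification $w^{\mathrm{sym}}_{\varepsilon}$, which is entire, applies the Poisson summation formula (Lemma~\ref{lem:PoissonSum}) to $g_{\varepsilon}=w^{\mathrm{sym}}_{\varepsilon}f$, shifts the contour to $\mathrm{Im}\,z=\mp d$ to obtain $|\hat{g}_{\varepsilon}(\omega)|\le C\,\mathrm{e}^{-d|\omega|}$, and then sends $\varepsilon\to +0$. Your horizontal-side estimates (the bound $|w(z;p,q)|\le \mathrm{e}^{(\mathrm{Im}\,z/p)^{2}}\tfrac12\mathrm{erfc}(\mathrm{Re}\,z/p-q)$, the $\mathrm{erfc}$ integral, and the splitting $\mathrm{e}^{\pi d/h}\cdot\mathrm{e}^{-2\pi d/h}=\mathrm{e}^{-\pi d/h}$) are correct and essentially reproduce the paper's \eqref{eq:int_g_2}--\eqref{eq:g_hat_decay}. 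The problem sits precisely where you defer it: the vertical segment on the imaginary axis.

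There the argument does not close. Combining $E_{+}$ and $E_{-}$ does yield an integrand proportional to $w(\mathrm{i}y;p,q)-w(-\mathrm{i}y;p,q)$, which is $\mathrm{O}(y)$ and cancels the pole of the kernel at $y=0$; but the surviving Abel--Plana boundary term is \emph{not} of lower order than the horizontal contribution. Explicitly,
\begin{align}
w(\mathrm{i}y;p,q)-w(-\mathrm{i}y;p,q)
= -\frac{2\mathrm{i}}{\sqrt{\pi}}\,\mathrm{e}^{-q^{2}}\int_{0}^{y/p}\mathrm{e}^{s^{2}}\cos(2qs)\,\mathrm{d}s
= -\frac{2\mathrm{i}\,\mathrm{e}^{-q^{2}}}{\sqrt{\pi}\,p}\,y+\mathrm{O}(y^{3}),
\end{align}
and after multiplying by the kernel, whose size on the vertical segment is $1/(2\sinh(\pi y/h))$ once the factor $|\mathrm{e}^{-\mathrm{i}\omega z}|\le \mathrm{e}^{\pi y/h}$ is absorbed, and integrating over $0<y<d$ using $\int_{0}^{\infty}y/(2\sinh(\pi y/h))\,\mathrm{d}y=h^{2}/8$, the vertical piece is of order $M\,h^{2}\,\mathrm{e}^{-q^{2}}\,\mathrm{e}^{(d/p)^{2}}/p$. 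This decays only algebraically in $h$ and, for fixed $p$, $q$, $d$, eventually dominates $\mathrm{e}^{-\pi d/h}$. Nor is it an artifact of the contour method: the derivative of $w(|x|;p,q)$ jumps by $-2\mathrm{e}^{-q^{2}}/(p\sqrt{\pi})$ at $x=0$, so an Euler--Maclaurin (equivalently, Poisson-summation) computation shows that $E_{w}^{(\infty,h)}(0)$ genuinely contains a term of size about $h^{2}\mathrm{e}^{-q^{2}}|f(0)|/(6\sqrt{\pi}\,p)$, which exceeds the right-hand side of \eqref{eq:DiscErr} for small $h$ with $p$, $q$ fixed. Hence the step ``what remains is of lower order than the horizontal piece'' would fail, and your argument can at best deliver \eqref{eq:DiscErr} plus an additive $\mathrm{O}(M h^{2}\mathrm{e}^{-q^{2}}\mathrm{e}^{(d/p)^{2}}/p)$ term. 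That extra term is harmless under the parameter coupling of Theorem~\ref{thm:FormulaParaErr}, where $q^{2}=\omega_{d}Nh/4\ge\pi d/h$, but it cannot be absorbed into the constant $C_{M,p,q,d}$ for general $p$, $q$, $h$. In short, your method makes explicit the kink contribution at the origin that the paper's mollification argument is designed to suppress; a complete proof along your lines must carry that term, not discard it.
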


\begin{lem}
\label{lem:TrunErr}
On Assumption~\ref{assump:f_L^{2}} and under the condition $N h > pq$, we have
\begin{align}
E_{w}^{(N, h)}(\omega)
& \leq 
\frac{\sqrt{\pi}\, p\, M}{2}\, \exp \left\{ -\frac{(Nh- pq)^{2}}{p^{2}} \right\}. 
\label{eq:TrunErr}
\end{align}
\end{lem}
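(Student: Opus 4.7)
The plan is to view $E_{w}^{(N,h)}(\omega)$ as the tail of the doubly infinite trapezoidal sum, crudely bound the contributions of $f$ and of the oscillatory factor, and then exploit the rapid decay of $w(\cdot;p,q)$ through an integral comparison.

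First I would split $F_{w}^{(\infty,h)}(\omega) - F_{w}^{(N,h)}(\omega)$ into the two tails $n \geq N+1$ and $n \leq -N-2$. After re-indexing the negative tail via $n \mapsto -n$ and using $w(|nh|;p,q) = w(nh;p,q)$ for $n \geq 0$, the triangle inequality together with the pointwise bound $|f(nh)| \leq M$ from Assumption~\ref{assump:f_L^{2}} and $|\mathrm{e}^{-\mathrm{i}\,\omega\,nh}| = 1$ yields
\begin{align*}
E_{w}^{(N,h)}(\omega) \leq Mh \sum_{n=N+1}^{\infty} w(nh;p,q) + Mh \sum_{n=N+2}^{\infty} w(nh;p,q) \leq 2Mh \sum_{n=N+1}^{\infty} w(nh;p,q).
\end{align*}

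Next, since $w(x;p,q) = \tfrac{1}{2}\mathop{\mathrm{erfc}}(x/p - q)$ is positive and strictly decreasing in $x \geq 0$, a standard Riemann-sum comparison and the substitution $u = x/p - q$ (whose lower limit is positive thanks to the hypothesis $Nh > pq$) give
\begin{align*}
h \sum_{n=N+1}^{\infty} w(nh;p,q) \leq \int_{Nh}^{\infty} w(x;p,q)\, \mathrm{d}x = \frac{p}{2} \int_{Nh/p - q}^{\infty} \mathop{\mathrm{erfc}}(u)\, \mathrm{d}u.
\end{align*}

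The final step is to bound the resulting erfc integral. I would use the elementary inequality $\mathop{\mathrm{erfc}}(u) \leq \mathrm{e}^{-u^{2}}$ valid for $u \geq 0$, together with the tail estimate $\int_{a}^{\infty} \mathrm{e}^{-u^{2}}\, \mathrm{d}u \leq (\sqrt{\pi}/2)\, \mathrm{e}^{-a^{2}}$; chaining these with the previous estimate with $a = Nh/p - q$ produces exactly $(\sqrt{\pi}\, p\, M /2)\exp\{-(Nh - pq)^{2}/p^{2}\}$. Every step is routine: the only real subtlety is choosing a crude enough bound on $\mathop{\mathrm{erfc}}$ so that the numerical constant comes out to the stated $\sqrt{\pi}\, p / 2$ rather than something larger (a sharper asymptotic bound such as $\mathop{\mathrm{erfc}}(u) \leq \mathrm{e}^{-u^{2}}/(\sqrt{\pi}\, u)$ would give a quantitatively different prefactor and is unnecessary here).
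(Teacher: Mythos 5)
Your proposal is correct and follows essentially the same route as the paper: bound the two tails by $2Mh\sum_{n>N}w(nh;p,q)$, compare the sum with an integral using monotonicity, and finish with the Gaussian tail estimate $\int_{a}^{\infty}\mathrm{e}^{-u^{2}}\,\mathrm{d}u \leq (\sqrt{\pi}/2)\,\mathrm{e}^{-a^{2}}$ (the paper merely applies the pointwise bound $\mathop{\mathrm{erfc}}(u)\leq \mathrm{e}^{-u^{2}}$ before the sum--integral comparison rather than after). The constants and the use of the hypothesis $Nh>pq$ check out.
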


\begin{rem}
Lemmas~\ref{lem:DiscErr} and~\ref{lem:TrunErr} are important
because they give explicit error bounds,
although they are standard estimates 
of the discretization error and the truncation error of the trapezoidal formula 
for the function $w(|x|; p, q) f(x)\, \mathrm{e}^{-\mathrm{i}\, \omega\, x}$, respectively.
As a technical note, 
we point out that 
the function $w(|z|; p, q)$ is not an analytic function of $z$ in $\mathcal{D}_{d}$. 
Then, we apply a smoothing technique to $w(|z|; p, q)$ to prove Lemma~\ref{lem:DiscErr}
as shown in Section~\ref{sec:Proofs}.
\end{rem}

\begin{rem}
It is possible to use another weight function $\tilde{w}(z; \tilde{p}, \tilde{q})$ 
analytic in $\mathcal{D}_{d}$ which realizes a similar formula to~\eqref{eq:TargetFormula}. 
In this paper, however, we analyze the formula~\eqref{eq:TargetFormula}
to show theoretically that even the non-smooth function $w(|z|; p, q)$ 
can achieve the exponential convergence rate shown in Lemma~\ref{lem:DiscErr}.
\end{rem}

\subsection{Choice of parameters and error control}

Using Lemmas~\ref{lem:E_w}, \ref{lem:DiscErr} and~\ref{lem:TrunErr}, 
we show in Theorem~\ref{thm:FormulaParaErr} below 
that appropriate choice of the parameters 
enables the formula~\eqref{eq:TargetFormula}
to compute the Fourier transform $F(\omega)$ with 
given accuracy in given ranges of~$\omega$.

\begin{thm}
\label{thm:FormulaParaErr}
Let Assumptions~\ref{assump:f_anal_etc} and~\ref{assump:f_L^{2}} be satisfied. 
Let $\omega_{d}$ and $\omega_{u}$ be real numbers with $0 < \omega_{d} < \omega_{u}$
and $\omega_{d} / \omega_{u} \leq \min\{ \alpha, 1/2 \}$, 
let $N$ be an integer with
\begin{align}
N \geq \frac{2\, d\, (\omega_{d} + \omega_{u})\, \omega_{u}^{2}}{\pi \, \omega_{d}^{2}},
\label{eq:set_N}
\end{align}
and let $h$, $p$, $q$ be defined by
\begin{align}
h = \sqrt{\frac{2\, \pi\, d\, (\omega_{d} + \omega_{u} ) }{\omega_{d}^{2}\, N}}, \quad 
p = \sqrt{\frac{N\, h}{\omega_{d}}}, \quad
q = \sqrt{\frac{\omega_{d}\, N\, h}{4}}.
\label{eq:set_h_p_q}
\end{align}
Then, 
for any $\omega$ with $\omega_{d} \leq | \omega | \leq \omega_{u}$, 
we have 
\begin{align}
\left|
F(\omega) - F_{w}^{(N, h)}(\omega)
\right|
\leq 
C(N)\, \exp\left(- \sqrt{ \frac{\pi\, d\, \omega_{d}^{2}\, N }{2\, (\omega_{d} + \omega_{u} )} } \right),
\label{eq:MainEstim}
\end{align}
where $C(N) = C_{1}(N) + C_{2}(N) + C_{3}(N)$ and 
\begin{align}
C_{1}(N) 
& = 
M \sqrt{\omega_{u}^{2} + \omega_{d}^{2}}
\left\{ 
\frac{\sqrt{\pi}} {\sqrt{\omega_{u}^{2} - \omega_{d}^{2}}}
\left(\frac{2\, \pi\, d\, (\omega_{d} + \omega_{u})\, N}{\omega_{d}^{4}} \right)^{1/4}
+
\frac{2}{\omega_{d}^{2} }
\right\}, \label{eq:Const_1} \\
C_{2}(N)
& = 
\frac{2\, M}{1 - \mathrm{e}^{- 2\, d\, \omega_{u}}}
\left\{
\frac{\sqrt{\pi}}{2} 
\left(\frac{2\, \pi\, d\, (\omega_{d} + \omega_{u})\, N}{\omega_{d}^{4}} \right)^{1/4} 
+ \sqrt{\frac{\pi\, d\, (\omega_{d} + \omega_{u} ) N}{2\, \omega_{d}^{2}}}
\right\}\, \mathrm{e}^{d\, \omega_{d}/4},  
\label{eq:Const_2} \\
C_{3}(N)
&=
\frac{\sqrt{\pi}\, M}{2}
\left(\frac{2\, \pi\, d\, (\omega_{d} + \omega_{u})\, N}{\omega_{d}^{4}} \right)^{1/4}.
\label{eq:Const_3} 
\end{align}
\end{thm}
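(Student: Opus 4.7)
The plan is to apply the triangle inequality~\eqref{eq:PartTotalErr} and bound each of $E_{w}(\omega)$, $E_{w}^{(\infty,h)}(\omega)$, $E_{w}^{(N,h)}(\omega)$ via Lemmas~\ref{lem:E_w}, \ref{lem:DiscErr}, \ref{lem:TrunErr} respectively, then verify that every exponential factor is dominated by $\exp(-\sqrt{\pi d\omega_{d}^{2} N/(2(\omega_{d}+\omega_{u}))})$ and that the remaining prefactors collect into $C_{1}(N)$, $C_{2}(N)$, $C_{3}(N)$. First I would record the identities obtained directly from~\eqref{eq:set_h_p_q}: $pq=Nh/2$, $p/(2q)=1/\omega_{d}$, and $\pi d/h = \sqrt{\pi d\omega_{d}^{2} N/(2(\omega_{d}+\omega_{u}))}$, the last coinciding with the target exponent in~\eqref{eq:MainEstim}. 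A short computation also shows that~\eqref{eq:set_N} is equivalent to $h\leq\pi/\omega_{u}$; combined with $\omega_{d}/\omega_{u}\leq 1/2$ this yields $Nh\geq 4d$ and therefore $(d/p)^{2}=d^{2}\omega_{d}/(Nh)\leq d\omega_{d}/4$.

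For $E_{w}(\omega)$ I would apply Lemma~\ref{lem:E_w} with $\alpha'=\omega_{d}/\omega_{u}$, which is admissible since $\omega_{d}/\omega_{u}\leq\alpha$. Because $p/(2q)=1/\omega_{d}$, the squared expression inside the first exponential is $(|\omega|/\omega_{u}-1)^{2}$, which on $\omega_{d}\leq|\omega|\leq\omega_{u}$ attains its maximum $(1-\omega_{d}/\omega_{u})^{2}$ at $|\omega|=\omega_{d}$; the elementary identity $1-(1-\omega_{d}/\omega_{u})^{2}/(1-\omega_{d}^{2}/\omega_{u}^{2})=2\omega_{d}/(\omega_{d}+\omega_{u})$ then turns the worst-case exponent into $-q^{2}\cdot 2\omega_{d}/(\omega_{d}+\omega_{u})=-\omega_{d}^{2}Nh/(2(\omega_{d}+\omega_{u}))=-\pi d/h$ by the preparatory step. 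The second exponent satisfies $|\omega|\alpha' pq\geq\omega_{d}^{2}Nh/(2\omega_{u})\geq\pi d/h$, so that term is smaller still. Substituting $\sqrt{1\pm\alpha'^{2}}=\sqrt{\omega_{u}^{2}\pm\omega_{d}^{2}}/\omega_{u}$ and $|\omega|\alpha'\geq\omega_{d}^{2}/\omega_{u}$ into the prefactors reproduces $C_{1}(N)$ exactly.

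For $E_{w}^{(\infty,h)}(\omega)$, the bound $h\leq\pi/\omega_{u}$ secures the hypothesis $|\omega|\leq\pi/h$ of Lemma~\ref{lem:DiscErr} on $[-\omega_{u},\omega_{u}]$, and $\exp(-\pi d/h)$ is already the target. In $C_{M,p,q,d}$ I would expand $(\sqrt{\pi}/2+q)p=\sqrt{\pi}p/2+Nh/2$ and recognise the two summands as $(\sqrt{\pi}/2)(2\pi d(\omega_{d}+\omega_{u})N/\omega_{d}^{4})^{1/4}$ and $\sqrt{\pi d(\omega_{d}+\omega_{u})N/(2\omega_{d}^{2})}$, then use $\exp((d/p)^{2})\leq\mathrm{e}^{d\omega_{d}/4}$ from the preparatory step and $1-\mathrm{e}^{-2\pi d/h}\geq 1-\mathrm{e}^{-2d\omega_{u}}$ from $\pi/h\geq\omega_{u}$ to obtain $C_{2}(N)$. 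For $E_{w}^{(N,h)}(\omega)$, the condition $Nh>pq=Nh/2$ is trivial, $(Nh-pq)^{2}/p^{2}=\omega_{d}Nh/4=q^{2}$, and a direct ratio shows $q^{2}=((\omega_{u}+\omega_{d})/(2\omega_{d}))\cdot\pi d/h\geq\pi d/h$; hence $\mathrm{e}^{-q^{2}}$ is dominated by the target, while $\sqrt{\pi}pM/2$ equals $C_{3}(N)$.

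The delicate step is the algebra of the second paragraph: the identity $1-(1-\omega_{d}/\omega_{u})^{2}/(1-\omega_{d}^{2}/\omega_{u}^{2})=2\omega_{d}/(\omega_{d}+\omega_{u})$ must produce precisely the target exponent $\pi d/h$ under the parameter choice~\eqref{eq:set_h_p_q} so that the leading exponential from Lemma~\ref{lem:E_w} aligns with those of Lemmas~\ref{lem:DiscErr} and~\ref{lem:TrunErr}; once this alignment is verified, the remainder is routine bookkeeping of the prefactors.
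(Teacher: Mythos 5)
Your proposal is correct and follows essentially the same route as the paper: decompose the error via \eqref{eq:PartTotalErr}, apply Lemmas~\ref{lem:E_w} (with $\alpha'=\omega_d/\omega_u$), \ref{lem:DiscErr} and~\ref{lem:TrunErr}, check that each exponent dominates $\pi d/h=\sqrt{\pi d\omega_d^2 N/(2(\omega_d+\omega_u))}$, and collect the prefactors into $C_1,C_2,C_3$. Your preparatory identities ($pq=Nh/2$, $p/(2q)=1/\omega_d$, $(d/p)^2\le d\omega_d/4$, $h\le\pi/\omega_u$) are exactly the bookkeeping the paper performs, stated slightly more explicitly.
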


\begin{rem}
Since Assumption~\ref{assump:f_L^{2}} does not exclude the case $f \not \in L^{1}(\mathbf{R})$,
the Fourier transform $F$ may be discontinuous at $\omega = 0$. 
Then,  we consider the positive lower bound $\omega_{d}$ of $| \omega |$ 
in Theorem~\ref{thm:FormulaParaErr}.
\end{rem}

\begin{rem}
It follows from the estimate~\eqref{eq:MainEstim} 
that $ | F(\omega) - F_{w}^{(N, h)}(\omega) | = \mathrm{O}(\sqrt{N}\, \exp(-c \sqrt{N}))$
for a constant $c>0$ independent of $N$. 
Therefore, appropriate setting of~$N$ realizes any given accuracy of the formula.
\end{rem}

\begin{proof}[Proof of Theorem~\ref{thm:FormulaParaErr}]
Using the parameters in~\eqref{eq:set_h_p_q}, 
we estimate the error bounds in Lemmas~\ref{lem:E_w}, \ref{lem:DiscErr}, and~\ref{lem:TrunErr}.
First, setting $\alpha' = \omega_{d} / \omega_{u}$ in~\eqref{eq:ErrEst_final} of Lemma~\ref{lem:E_w} and 
noting $\omega_{d} \leq | \omega | \leq \omega_{u}$, 
we have
\begin{align}
1 - \left( \frac{\alpha' p}{2q} |\omega| - 1 \right)^{2}/(1 - \alpha'^{2}) 
& =
1 - \frac{ (| \omega | / \omega_{u} - 1 )^{2} }{ 1 - (\omega_{d} / \omega_{u})^{2} } 
\geq 
1 - \frac{  (\omega_{d} / \omega_{u} - 1 )^{2}}{ 1 - (\omega_{d} / \omega_{u})^{2} } 
= 
\frac{2\, \omega_{d} / \omega_{u}}{ 1 + \omega_{d} / \omega_{u} }. 
\end{align}
Therefore we have
\begin{align}
q^{2} 
\left\{
1 - \left( \frac{\alpha' p}{2q} |\omega| - 1 \right)^{2}/(1 - \alpha'^{2}) 
\right\}
\geq 
\frac{\omega_{d}^{2}\, N\, h}{2\, (\omega_{d} + \omega_{u} )}.
\label{eq:L1_1}
\end{align}
Moreover, as for the last term in~\eqref{eq:ErrEst_final}, we can deduce
\begin{align}
| \omega |\, \alpha'\, p\, q \geq \frac{\omega_{d}^{2}\, N\, h}{2\, \omega_{u}}. 
\label{eq:L1_2}
\end{align}
Then, combining~\eqref{eq:ErrEst_final}, \eqref{eq:L1_1} and~\eqref{eq:L1_2}, 
we have
\begin{align}
E_{w}(\omega) 
& \leq 
M \sqrt{1 + (\omega_{d} / \omega_{u})^{2}}
\left\{ 
\frac{\sqrt{\pi}} {\sqrt{1 - (\omega_{d} / \omega_{u})^{2}}}
\sqrt{\frac{N\, h}{\omega_{d}}}
+
\frac{2\, \omega_{u}}{\omega_{d}^{2} }
\right\} \, 
\exp\left(- \frac{\omega_{d}^{2}\, N\, h}{2\, (\omega_{d} + \omega_{u} )} \right) \notag \\
& =
C_{1}(N)\, \exp\left(- \sqrt{ \frac{\pi\, d\, \omega_{d}^{2}\, N }{2\, (\omega_{d} + \omega_{u} )} } \right). 
\label{eq:Est_L1}
\end{align}
Next, to use Lemma~\ref{lem:DiscErr} to estimate $E_{w}^{(\infty, h)}(\omega)$, 
we need to check $\omega_{u} \leq \pi / h$. 
This follows from~\eqref{eq:set_N} and the definition of $h$ in~\eqref{eq:set_h_p_q}. 
Then, substituting $h, p, q$ in~\eqref{eq:set_h_p_q} 
into~\eqref{eq:DiscErr}
and~\eqref{eq:DiscErrConst}, 
we have
\begin{align}
E_{w}^{(\infty, h)}(\omega) 
\leq
C_{2}(N)\, \exp \left( - \sqrt{ \frac{\pi\, d\, \omega_{d}^{2}\, N }{2\, (\omega_{d} + \omega_{u} )} } \right). 
\label{eq:Est_L2}
\end{align}
Finally, substituting $h, p, q$ in~\eqref{eq:set_h_p_q} into~\eqref{eq:TrunErr}
and noting $\omega_{d} / \omega_{u} \leq 1/2$, 
we have
\begin{align}
E_{w}^{(N, h)}(\omega)
& \leq
C_{3}(N)\, 
\exp \left( -\frac{\omega_{d}\, N\, h}{4} \right)
\leq
C_{3}(N)\, \exp \left( -\frac{\omega_{d}^{2}\, N\, h}{2\, (\omega_{d} + \omega_{u})} \right) \notag \\
& =
C_{3}(N)\, \exp \left( - \sqrt{ \frac{\pi\, d\, \omega_{d}^{2}\, N }{2\, (\omega_{d} + \omega_{u} )} } \right). 
\label{eq:Est_L3}
\end{align}
From the estimates~\eqref{eq:Est_L1}, \eqref{eq:Est_L2} and~\eqref{eq:Est_L3}, we obtain the conclusion.
\end{proof}

\section{Numerical Experiments}
\label{sec:Num}

Recall that we need to compute the values 
\begin{align}
F_{w}^{(N, h)}(m \tilde{h})
& =
h \sum_{n=-N-1}^{N} w(|nh|; p, q) f(nh)\, \mathrm{e}^{-\mathrm{i}\, m n h \tilde{h}}
\label{eq:F_w_N_h_m_tildeh}
\end{align}
for $m = -N-1,\ldots, N$, 
where $\tilde{h}$ is defined by~\eqref{eq:tilde_h} and 
$h$, $p$, $q$ are defined by~\eqref{eq:set_h_p_q}, respectively. 
Unless $\tilde{h}$ and $h$ satisfy $h \tilde{h} = \pi/(N+1)$, 
we cannot use the FFT directly to the computation of~\eqref{eq:F_w_N_h_m_tildeh}.
Then, we use fractional FFT described in Section~\ref{sec:FFFT} below, 
and show some numerical examples in Section~\ref{sec:NumEx}. 

\subsection{Fractional FFT}
\label{sec:FFFT}

Fractional FFT is developed by 
Bailey and Swarztrauber \cite{bib:BailSwarFRFT1991}
to enable computation of a sum such 
as~\eqref{eq:F_w_N_h_m_tildeh} with arbitrary $h$ and $\tilde{h}$.
For example, Chourdakis \cite{bib:ChourdakisPriceFracFFT2005}
applied the fractional FFT to option pricing problems.

The fractional FFT is derived by 
regarding the sum in~\eqref{eq:F_w_N_h_m_tildeh} as a circular convolution.
For simplicity, we introduce $m' = m + N + 1$ and $n' = n + N + 1$
to shift the range of the indexes as follows:
\begin{align}
F_{w}^{(N, h)}(m \tilde{h})
& =
h \sum_{n'=0}^{2(N+1)-1} w(|(n'-N-1)h|; p, q) f((n'-N-1)h)\, 
\mathrm{e}^{-\mathrm{i}\, (m'-N-1) (n'-N-1) h \tilde{h}} \notag \\
& =
\mathrm{e}^{\mathrm{i}\, (m'-N-1) (N+1) h \tilde{h}}
\sum_{n'=0}^{2(N+1)-1} x_{n'}\, \mathrm{e}^{-\mathrm{i}\, m'n' h \tilde{h}},
\label{eq:F_w_N_h_indexes_shift}
\end{align}
where 
$x_{n'} = h\, w(|(n'-N-1)h|; p, q) f((n'-N-1)h)\, \mathrm{e}^{\mathrm{i}\, (N+1)\, n'\, h \tilde{h}}$.
Then, for $m' = 0,\ldots, 2(N+1)$,
the sum in~\eqref{eq:F_w_N_h_indexes_shift} is rewritten in the form
\begin{align}
\sum_{n'=0}^{2(N+1)-1} x_{n'}\, \mathrm{e}^{-\mathrm{i}\, m'n' h \tilde{h}}
= 
\sum_{n'=0}^{2(N+1)-1} x_{n'}\, \mathrm{e}^{-\pi \mathrm{i}\, \{ m'^{2} + n'^{2} - (m' - n')^{2} \} \alpha}
= 
\mathrm{e}^{-\pi \mathrm{i}\, m'^2 \alpha } 
\sum_{n'=0}^{2(N+1)-1} y_{n'}\, z_{m' - n'},
\label{eq:conv1}
\end{align}
where $\alpha = h \tilde{h} / (2\pi)$, 
$y_{n'} = x_{n'}\, \mathrm{e}^{-\pi \mathrm{i}\, n'^2 \alpha}$ and
$z_{n'} = \mathrm{e}^{\pi \mathrm{i}\, n'^2 \alpha}$.
Note that the sum in \eqref{eq:conv1} is a convolution but not circular, i.e., 
we cannot regard $\{ z_{n'} \}_{n' = 0}^{2N+1}$ as $2(N+1)$-periodic such as $z_{n'+2(N+1)} = z_{n'}$.
Then, we need to convert this sum into a form of a circular convolution.
A way for this conversion is to extend
$\{ y_{n'} \}_{n'=0}^{2N+1}$ and $\{ z_{n'} \}_{n' = 0}^{2N+1}$ 
to length $4(N+1)$ by setting
\begin{align}
y_{n'} = 0,\quad 
z_{n'} = \mathrm{e}^{\pi \mathrm{i}\, \{ 4(N+1) - n' \}^2 \alpha}
\label{eq:yz_extend}
\end{align}
for $n'$ with $2(N+1) \leq n' < 4(N+1)$.
Then, 
combining~\eqref{eq:F_w_N_h_indexes_shift}, 
\eqref{eq:conv1} 
and~\eqref{eq:yz_extend}, 
we have an expression of $F_{w}^{(N, h)}(m \tilde{h})$ with a circular convolution:
\begin{align}
F_{w}^{(N, h)}(m \tilde{h})
=
e_{m'} 
\sum_{n'=0}^{4(N+1)-1} y_{n'}\, z_{m' - n'},
\label{eq:CircConv}
\end{align}
where 
$e_{m'} = \mathrm{e}^{2\pi\mathrm{i}\, (m'-N-1) (N+1) \alpha - \pi \mathrm{i}\, m'^2 \alpha } $.
The RHS of~\eqref{eq:CircConv} can be obtained by
\begin{align}
\{ e_{m'} \} \odot 
\left\{
\mathrm{IDFT}_{m'}
\left(\, 
\{ \mathrm{DFT}_{\ell'}(\{ y_{n'} \}) \} \odot 
\{ \mathrm{DFT}_{\ell'}(\{ z_{n'} \})\}\, 
\right)
\right\},
\label{eq:ExprDFT}
\end{align}
where $\odot$ denotes element-by-element vector multiplication, 
and 
$\mathrm{DFT}$ and $\mathrm{IDFT}$ denote 
the discrete and inverse discrete Fourier transform, respectively.
Therefore, 
applying the ordinal FFT to $\mathrm{DFT}$ and $\mathrm{IDFT}$ in~\eqref{eq:ExprDFT},
we can compute \eqref{eq:CircConv} 
with computation time $\mathrm{O}(N \log N)$.

\subsection{Numerical Examples}
\label{sec:NumEx}

We consider the following functions $f_{i}$ and their Fourier transforms $F_{i}$.

\begin{ex}
\label{ex:Ex1_K0}
\begin{align}
f_{1}(x) = \frac{1}{\sqrt{1 + x^{2}}}, \quad 
F_{1}(\omega) = 2\, K_{0}(|\omega|).
\label{eq:Ex1}
\end{align} 
\end{ex}

\begin{ex}
\label{ex:Ex2_Gamma}
\begin{align}
f_{2}(x) = \frac{1}{(1 - \mathrm{i}\, x)^{2}}, \quad 
F_{2}(\omega) = 
\begin{cases}
0 & (\omega < 0),\\
2\pi\, \omega\, \mathrm{e}^{-\omega} & (\omega \geq 0).
\end{cases}
\label{eq:Ex2}
\end{align} 
\end{ex}

\noindent
The function $f_{1}$ is taken from~\cite{bib:OouraEuler2001}, 
where $K_{0}$ is the modified Bessel function of the second kind. 
The function $f_{2}$ is the characteristic function of the gamma distribution $\mathrm{Ga}(2,1)$, 
and $F_{2}(\omega)/(2\pi)$ is the density function of $\mathrm{Ga}(2,1)$.
The functions $f_{1}$ and $f_{2}$ satisfy
$f_{1}(x) = \mathrm{O}(|x|^{-1})\ (x \to \pm \infty)$ and 
$f_{2}(x) = \mathrm{O}(|x|^{-2})\ (x \to \pm \infty)$, respectively.
Moreover, 
$F_{1}$ is discontinuous at $\omega = 0$, 
and $F_{2}$ is not differentiable at $\omega = 0$.

To use Theorem~\ref{thm:FormulaParaErr}, 
we note that $f_{1}$ satisfies Assumptions~\ref{assump:f_anal_etc} and~\ref{assump:f_L^{2}} for 
\begin{align}
\alpha = d = 1 - \varepsilon_1,\ M = 1/\sqrt{\varepsilon_{1}}, 
\label{eq:assump_para_f1}
\end{align}
and $f_{2}$ satisfies them for 
\begin{align}
\alpha = d = 1 - \varepsilon_2,\ M =  \max\{ 2,\, 1/\varepsilon_2^{2} \},
\label{eq:assump_para_f2}
\end{align}
where $\varepsilon_{i}\ (i = 1,2)$ are arbitrary real numbers with $0< \varepsilon_{i} <1\ (i=1,2)$.
Here, we use $\varepsilon_{1} = 0.01$ and $\varepsilon_{2} = 0.1$. 
Using these, we apply the formula~\eqref{eq:F_w_N_h_m_tildeh} to $f_{i}\ (i=1,2)$
in the following procedure.
\begin{enumerate}
\item Give $\omega_{d}$ and $\omega_{u}$ in Theorem~\ref{thm:FormulaParaErr}, and 
set an error bound $\epsilon$ by which errors of the computed values for $\omega = m \tilde{h}$ 
with $\omega_{d} \leq |\omega| \leq \omega_{u}$ should be bounded. 
\item Choose $N$ from $\{ 2^{j} - 1 \mid j = 1,2,\ldots \}$ satisfying~\eqref{eq:set_N} and 
$\text{(The RHS of \eqref{eq:MainEstim})} \leq \epsilon$.
Let $N_{\epsilon}$ denote $N$ chosen here.
\item Set $h$, $p$ and $q$ by~\eqref{eq:set_h_p_q}, and apply the formula~\eqref{eq:F_w_N_h_m_tildeh}
to $f_{i}\ (i=1,2)$ with the fractional FFT.
\end{enumerate}
For this experiment, we choose the following sets of $\omega_{d}$, $\omega_{u}$ and $\epsilon$.
\begin{align}
& \text{(A)}\ \omega_{d} = 2, \omega_{u} = 10, \quad
\text{(B)}\ \omega_{d} = 1, \omega_{u} = 10, \quad 
\text{(C)}\ \omega_{d} = 1.25, \omega_{u} = 15, 
\label{eq:RangeSet} \\
& \text{(a)}\ \epsilon = 10^{-3}, \quad 
\text{(b)}\ \epsilon = 10^{-6}.
\label{eq:ErrSet}
\end{align}
Then, we compute~\eqref{eq:F_w_N_h_m_tildeh} for 
the six combinations of~\eqref{eq:RangeSet} and~\eqref{eq:ErrSet} for $f_{i}\ (i=1,2)$.

All computations are done by MATLAB R2013a programs with double precision floating point arithmetic
on a PC with 3.0GHz CPU and 2GB RAM.
The integers $N_{\epsilon}$ and computation times in the all cases are shown 
by Table~\ref{tab:N_e_and_Time_Ex1} for Example~\ref{ex:Ex1_K0} 
and
Table~\ref{tab:N_e_and_Time_Ex2} for Example~\ref{ex:Ex2_Gamma}. 
Each computation time is a mean of 
three results measured by three executions of each case.
These results show that 
the computation time is about doubled
when $N_{\epsilon}$ increases about by two times, 
which is about in accordance with the order of the theoretical time $\mathrm{O}(N \log N)$. 
Furthermore, errors of these computations are shown by 
Figures~\ref{fig:Ex1_02_10_m3}--\ref{fig:Ex1_01_15_m6}
for Example~\ref{ex:Ex1_K0} and 
Figures~\ref{fig:Ex2_02_10_m3}--\ref{fig:Ex2_01_15_m6}
for Example~\ref{ex:Ex2_Gamma}, respectively.
Then, we can observe that these results are consistent with 
the theoretical estimate of Theorem~\ref{thm:FormulaParaErr}, 
although it does not seem to be tight 
because the real errors are much smaller than the bounds given in~\eqref{eq:ErrSet}.

Here, using Example~\ref{ex:Ex2_Gamma}, 
we show another application of the formula~\eqref{eq:F_w_N_h_m_tildeh}.
Let $G_{2}$ denote the indefinite integral of $F_{2}(\omega)/(2\pi)$:
\begin{align}
G_{2}(\omega)
=
\frac{1}{2\pi} \int_{-\infty}^{\omega} F_{2}(\nu)\, \mathrm{d}\nu
=
\begin{cases}
0 & (\omega < 0),\\
1- (1+\omega)\, \mathrm{e}^{-\omega} & (\omega \geq 0).
\end{cases}
\label{eq:GaDist}
\end{align}
This is the cumulative distribution function of the Gamma distribution $\mathrm{Ga}(2,1)$.
We show that $G_{2}$ can be computed directly from the characteristic function $f_{2}$ 
in~\eqref{eq:Ex2} by the formula~\eqref{eq:F_w_N_h_m_tildeh}.
Since $G_{2}$ does not have the inverse Fourier transform, 
using the Heaviside function
\begin{align}
H(\omega)
=
\begin{cases}
0 & (\omega < 0), \\
1 & (\omega \geq 0),
\end{cases}
\end{align}
we define a function $\tilde{G}_{2}$ by
\begin{align}
\tilde{G}_{2}(\omega) = G_{2}(\omega) - H(\omega)
\end{align}
such that $\tilde{G}_{2}$ has the inverse Fourier transform $\tilde{f}_{2}$:
\begin{align}
\tilde{f}_{2}(x)
=
\frac{1}{2\pi}
\int_{-\infty}^{\infty} \tilde{G}_{2}(\omega)\, \mathrm{e}^{\mathrm{i}\, x\, \omega}\, \mathrm{d}\omega 
= 
\begin{cases}
\displaystyle
\mathrm{i}\, \frac{f_{2}(x) - 1}{ 2 \pi x}  & (x \neq 0), \\
\mathrm{i}\, f'_{2}(0) / (2\pi) & (x = 0).
\end{cases}
\label{eq:trans_H_2}
\end{align}
The function $\tilde{f}_{2}$ satisfies 
Assumptions~\ref{assump:f_anal_etc} and~\ref{assump:f_L^{2}} 
for 
\begin{align}
\alpha = d = 1 - \tilde{\varepsilon}_{2},\, 
M = \max\{ 2\sqrt{2}/\pi, 3/(2\pi \tilde{\varepsilon}_{2}^{2}) \}, 
\label{eq:ParaEx2plus}
\end{align}
where $\tilde{\varepsilon}_{2}$ is an arbitrary real number with $0< \tilde{\varepsilon}_{2} <1$.
Then, using the formula~\eqref{eq:F_w_N_h_m_tildeh}, 
we can obtain approximate values of $G_{2}(m \tilde{h})\ (m=-N-1,\ldots, N)$ as follows:
\begin{align}
G_{2}(m \tilde{h})
\approx
h \sum_{n=-N-1}^{N} w(|nh|; p, q) \tilde{f}_{2}(nh)\, \mathrm{e}^{-\mathrm{i}\, m n h \tilde{h}} 
+ H(m \tilde{h}).
\label{eq:G_2_approx}
\end{align}
Here, we show the result in the case of (A)-(a) in~\eqref{eq:RangeSet} and~\eqref{eq:ErrSet}.
The other cases can be done similarly. 
For $\tilde{\varepsilon}_{2} = 0.1$ in~\eqref{eq:ParaEx2plus} 
we choose $N_{\epsilon} = 1023$ and obtain the result shown in Figure~\ref{fig:Ex3_02_10_m3}.

\begin{table}[t]
\begin{center}
\caption{The values of $N_{\epsilon}$ determined in the step 2 of the procedure 
and computation times for Example~\ref{ex:Ex1_K0}.}
\label{tab:N_e_and_Time_Ex1}
\begin{tabular}{c c  r  c }
 & & $N_{\epsilon}$ & Time (sec.) \\
\hline
\multirow{2}{*}{(A)} & (a) &   $511$ & $0.039$ \\
                         & (b) & $1023$ & $0.072$ \\
\multirow{2}{*}{(B)} & (a) & $2047$ & $0.138$ \\
                         & (b) & $4095$ & $0.271$ \\
\multirow{2}{*}{(C)} & (a) & $2047$ & $0.137$ \\
                         & (b) & $4095$ & $0.268$ \\
\end{tabular}
\end{center}
\end{table}

\begin{figure}[H]
\begin{center}
\begin{minipage}{0.45\linewidth}
\includegraphics[width=.9\linewidth]{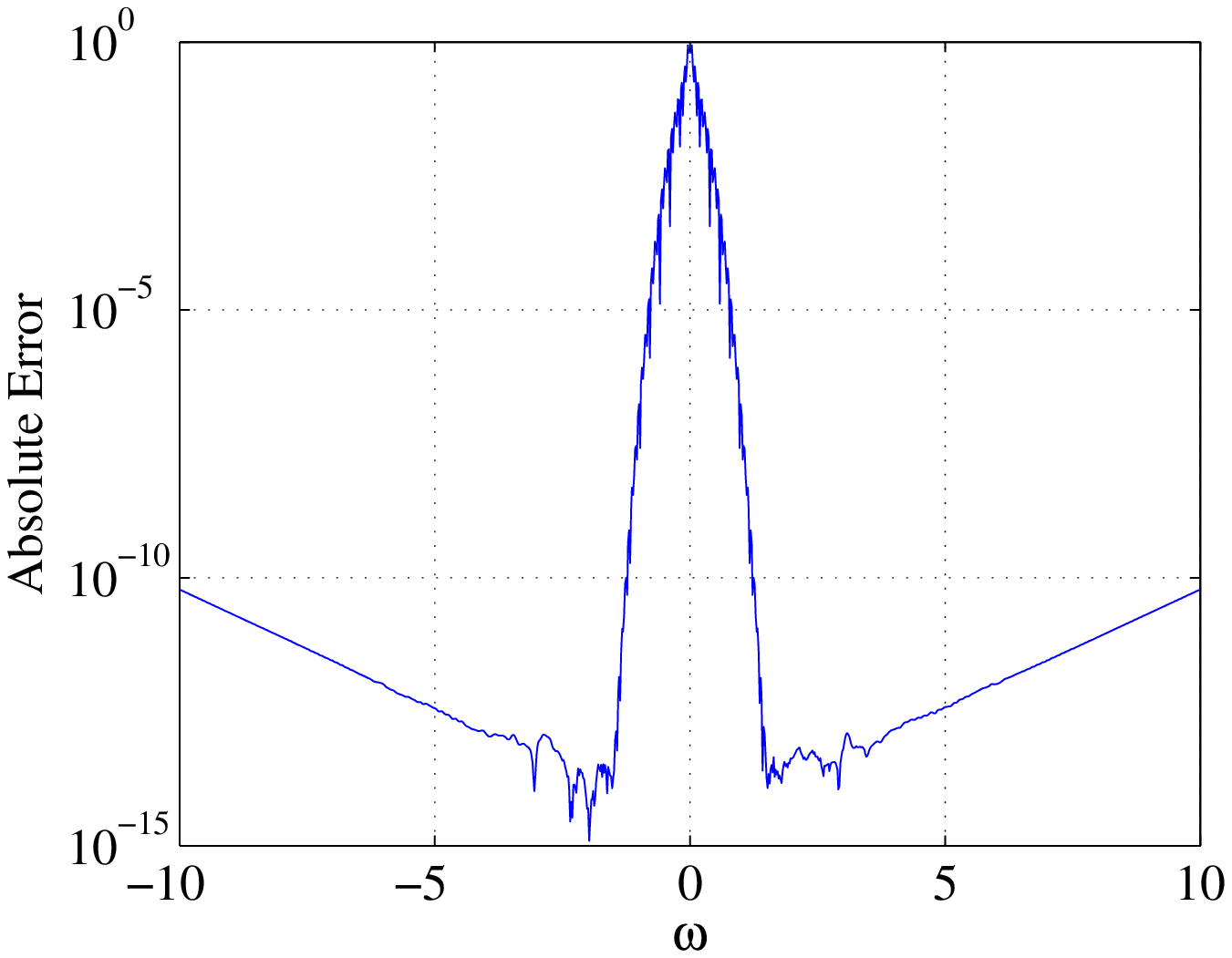}
\caption{Error for $F_{1}$ in~\eqref{eq:Ex1} 
in the case of (A)-(a) in~\eqref{eq:RangeSet} and~\eqref{eq:ErrSet}.}
\label{fig:Ex1_02_10_m3}
\end{minipage}
\quad  
\begin{minipage}{0.45\linewidth}
\includegraphics[width=.9\linewidth]{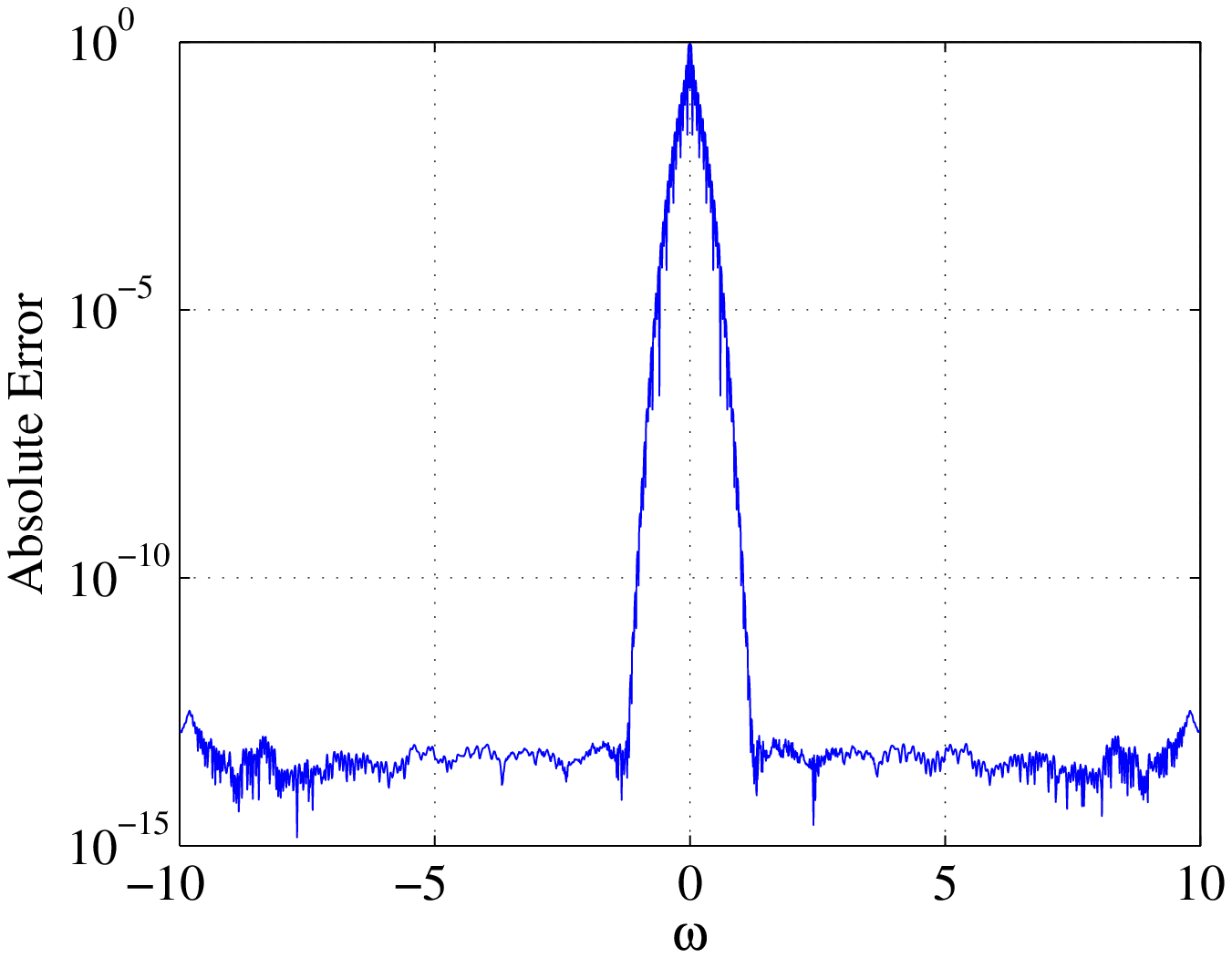}
\caption{Error for $F_{1}$ in~\eqref{eq:Ex1} 
 in the case of (A)-(b) in~\eqref{eq:RangeSet} and~\eqref{eq:ErrSet}.}
\label{fig:Ex1_02_10_m6}
\end{minipage}

\begin{minipage}{0.45\linewidth}
\includegraphics[width=.9\linewidth]{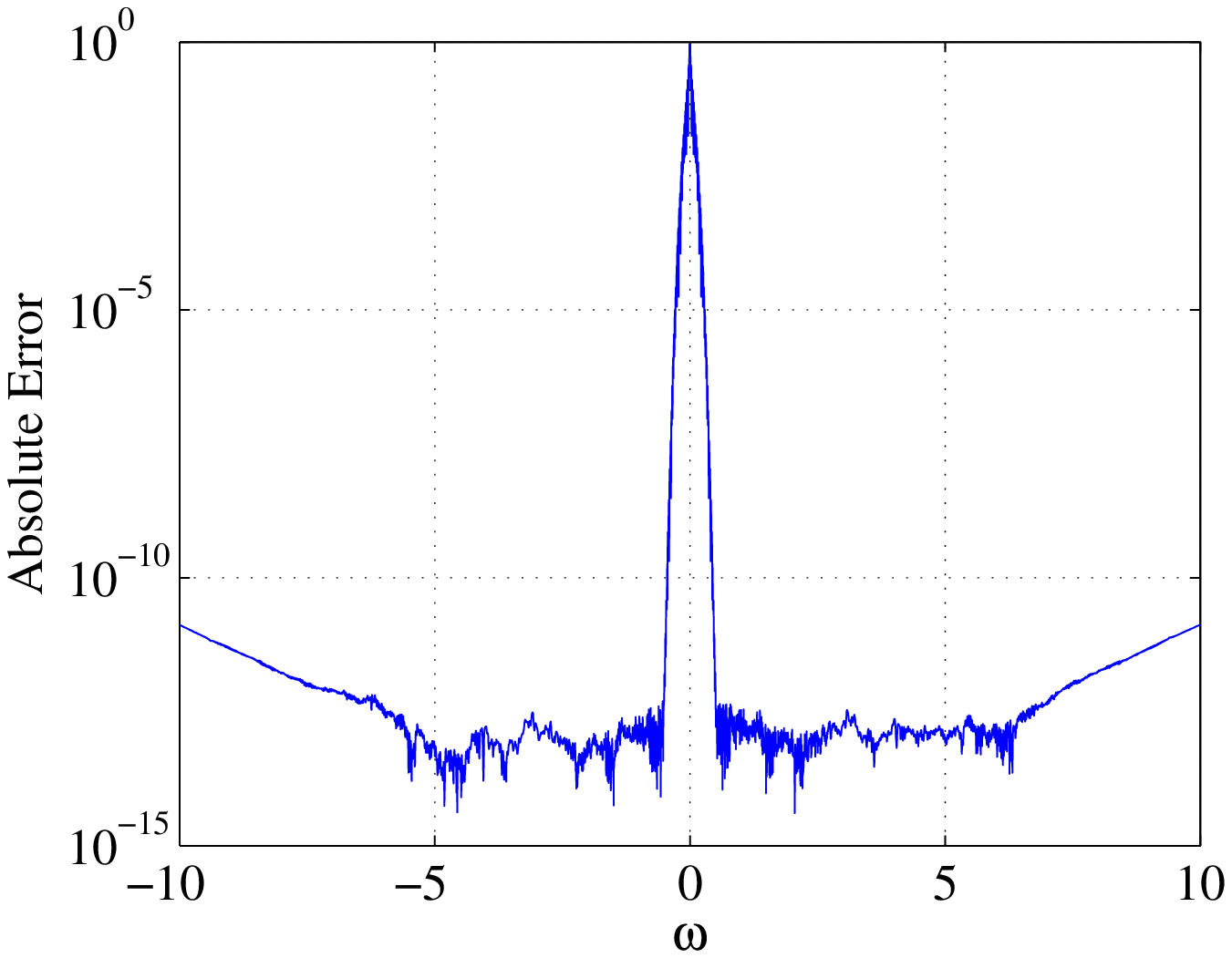}
\caption{Error for $F_{1}$ in~\eqref{eq:Ex1} 
 in the case of (B)-(a) in~\eqref{eq:RangeSet} and~\eqref{eq:ErrSet}.}
\label{fig:Ex1_01_10_m3}
\end{minipage}
\quad  
\begin{minipage}{0.45\linewidth}
\includegraphics[width=.9\linewidth]{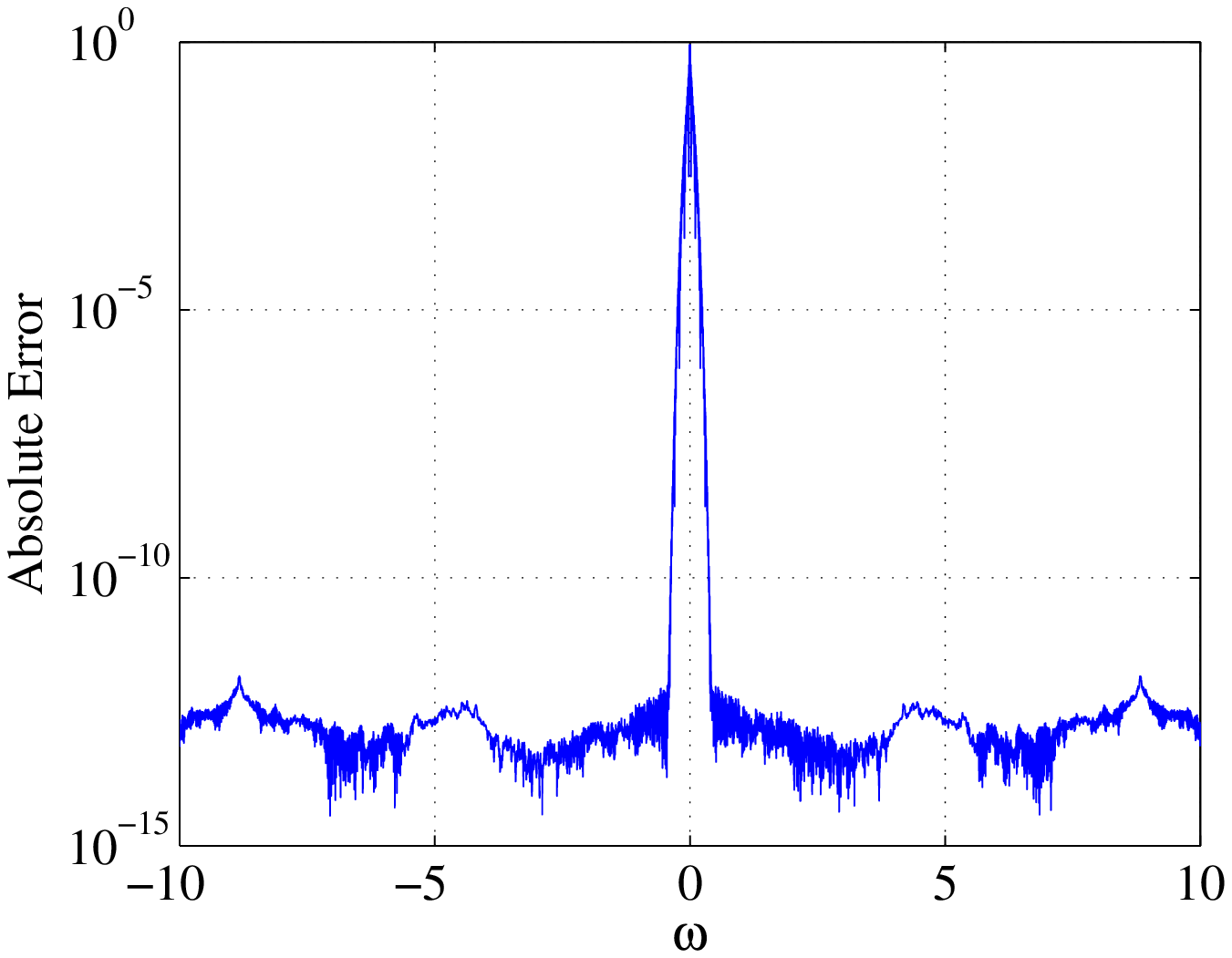}
\caption{Error for $F_{1}$ in~\eqref{eq:Ex1} 
 in the case of (B)-(b) in~\eqref{eq:RangeSet} and~\eqref{eq:ErrSet}.}
\label{fig:Ex1_01_10_m6}
\end{minipage}

\begin{minipage}{0.45\linewidth}
\includegraphics[width=.9\linewidth]{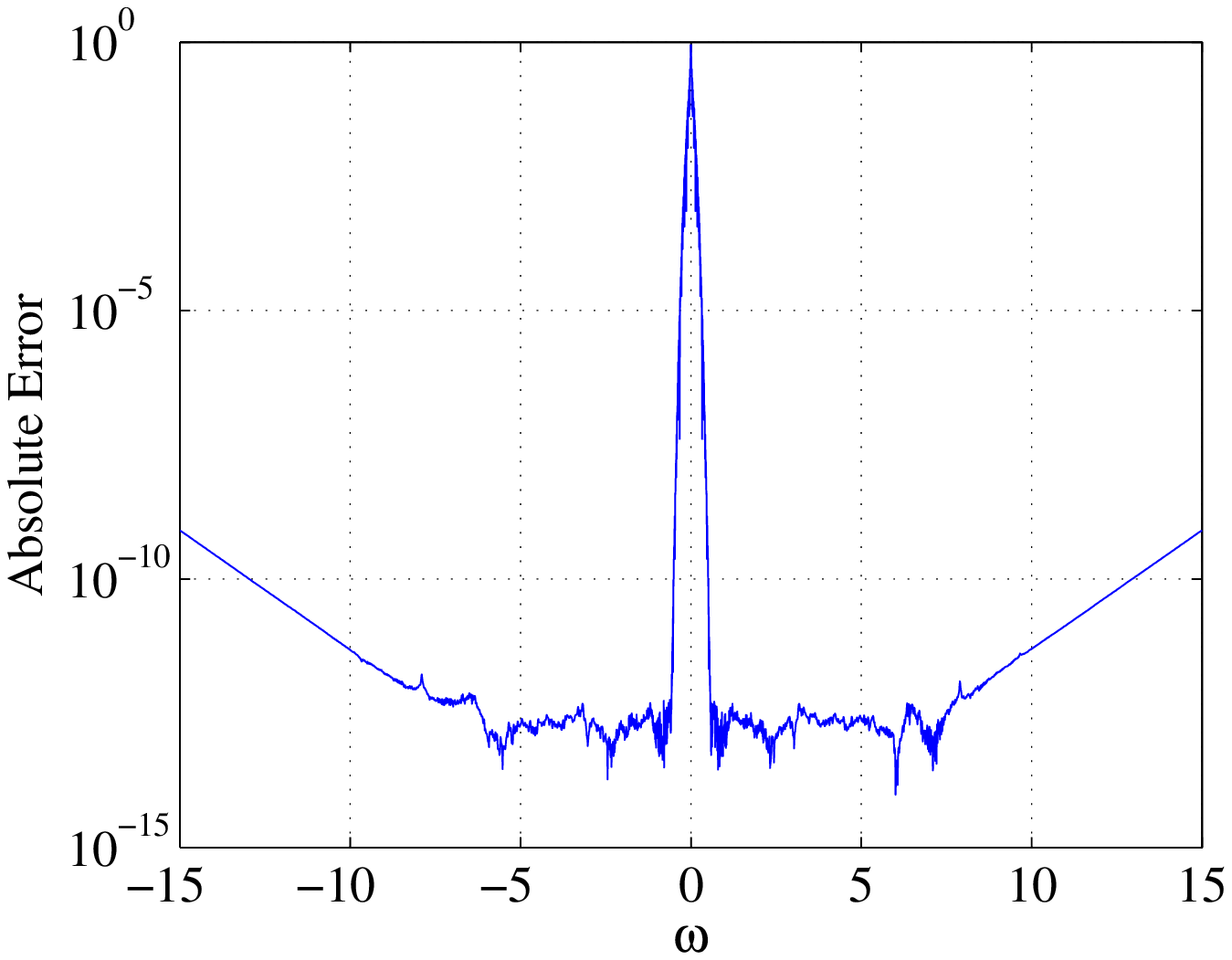}
\caption{Error for $F_{1}$ in~\eqref{eq:Ex1} 
 in the case of (C)-(a) in~\eqref{eq:RangeSet} and~\eqref{eq:ErrSet}.}
\label{fig:Ex1_01_15_m3}
\end{minipage}
\quad  
\begin{minipage}{0.45\linewidth}
\includegraphics[width=.9\linewidth]{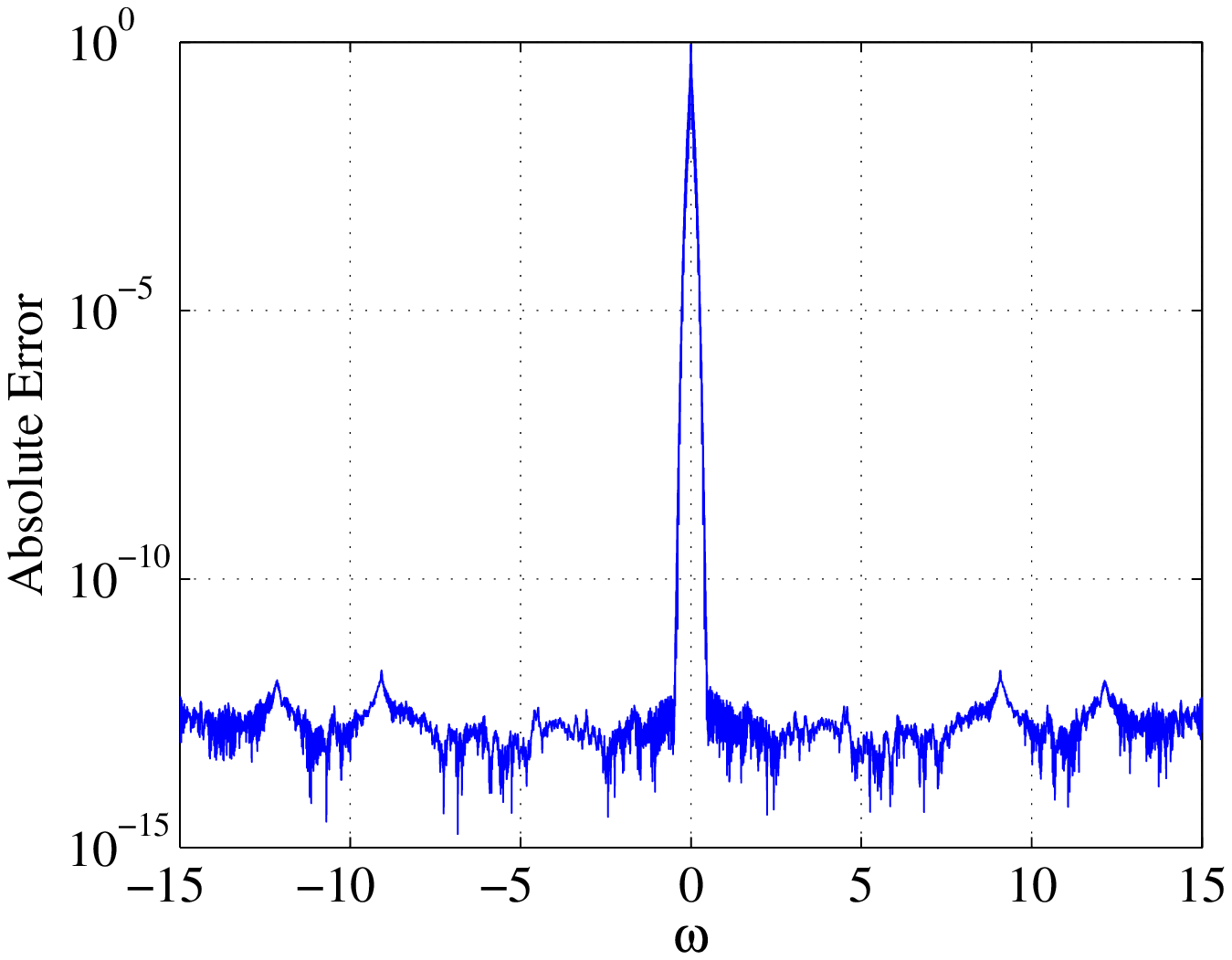}
\caption{Error for $F_{1}$ in~\eqref{eq:Ex1} 
 in the case of (C)-(b) in~\eqref{eq:RangeSet} and~\eqref{eq:ErrSet}.}
\label{fig:Ex1_01_15_m6}
\end{minipage}
\end{center}
\end{figure}

\begin{table}[t]
\begin{center}
\caption{The values of $N_{\epsilon}$ determined in the step 2 of the procedure 
and computation times for Example~\ref{ex:Ex2_Gamma}.}
\label{tab:N_e_and_Time_Ex2}
\begin{tabular}{c c  r  c }
 & & $N_{\epsilon}$ & Time (sec.) \\
\hline
\multirow{2}{*}{(A)} & (a) &   $511$ & $0.040$ \\
                         & (b) & $2047$ & $0.134$ \\
\multirow{2}{*}{(B)} & (a) & $2047$ & $0.134$ \\
                         & (b) & $8191$ & $0.507$ \\
\multirow{2}{*}{(C)} & (a) & $2047$ & $0.134$ \\
                         & (b) & $4095$ & $0.259$ \\
\end{tabular}
\end{center}
\end{table}

\begin{figure}[H]
\begin{center}
\begin{minipage}{0.45\linewidth}
\includegraphics[width=.9\linewidth]{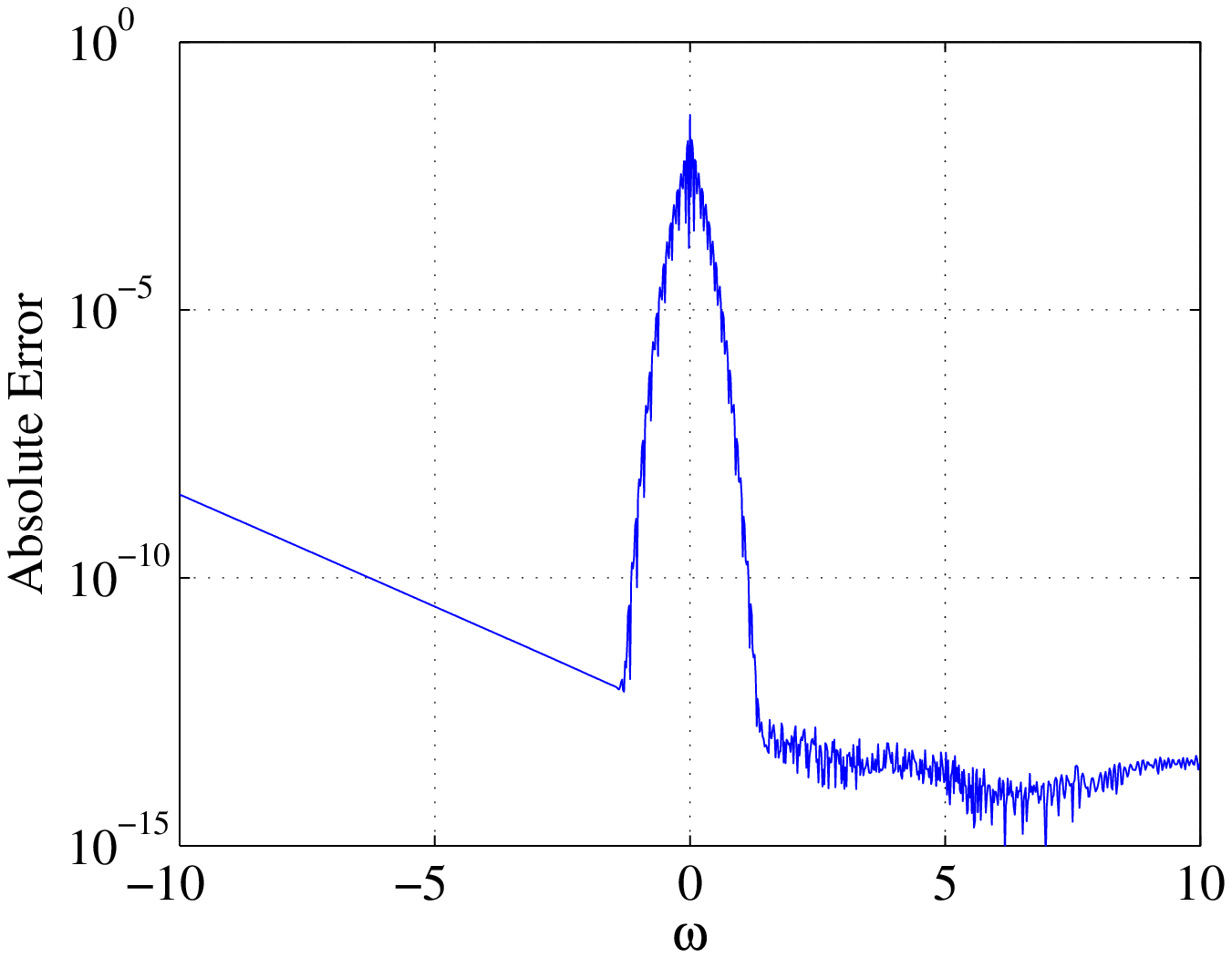}
\caption{Error for $F_{2}$ in~\eqref{eq:Ex2} 
 in the case of (A)-(a) in~\eqref{eq:RangeSet} and~\eqref{eq:ErrSet}.}
\label{fig:Ex2_02_10_m3}
\end{minipage}
\quad  
\begin{minipage}{0.45\linewidth}
\includegraphics[width=.9\linewidth]{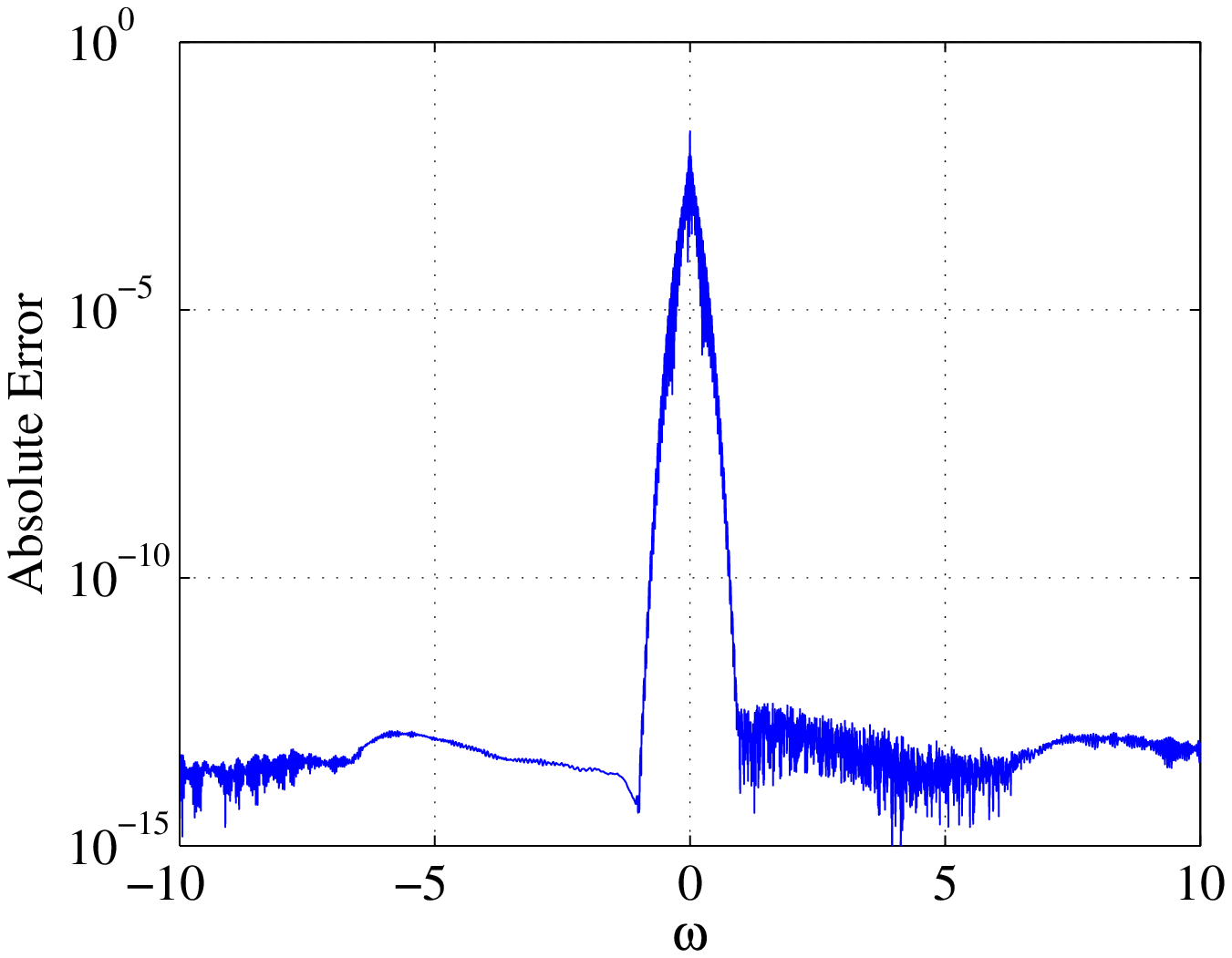}
\caption{Error for $F_{2}$ in~\eqref{eq:Ex2} 
 in the case of (A)-(b) in~\eqref{eq:RangeSet} and~\eqref{eq:ErrSet}.}
\label{fig:Ex2_02_10_m6}
\end{minipage}

\begin{minipage}{0.45\linewidth}
\includegraphics[width=.9\linewidth]{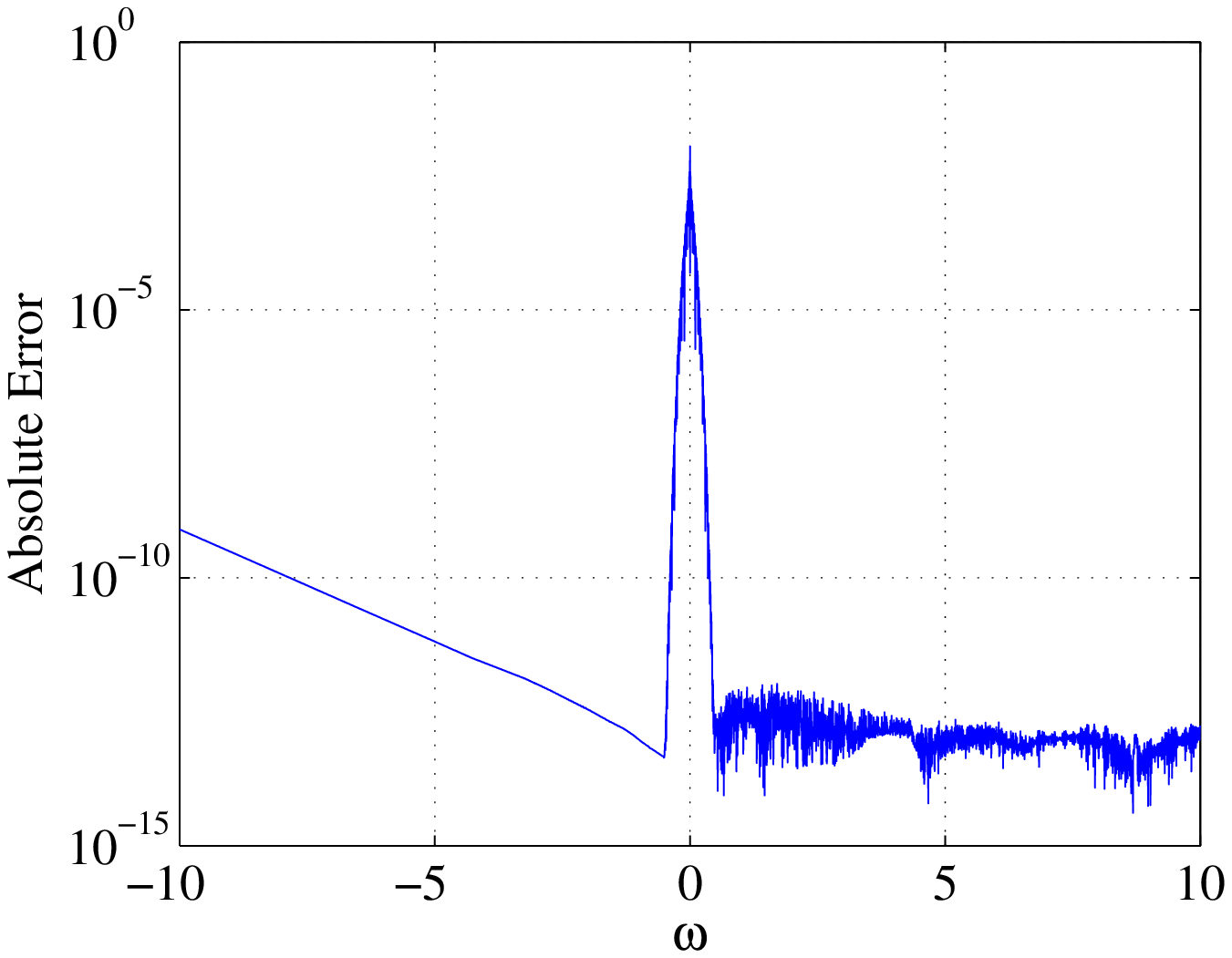}
\caption{Error for $F_{2}$ in~\eqref{eq:Ex2} 
 in the case of (B)-(a) in~\eqref{eq:RangeSet} and~\eqref{eq:ErrSet}.}
\label{fig:Ex2_01_10_m3}
\end{minipage}
\quad  
\begin{minipage}{0.45\linewidth}
\includegraphics[width=.9\linewidth]{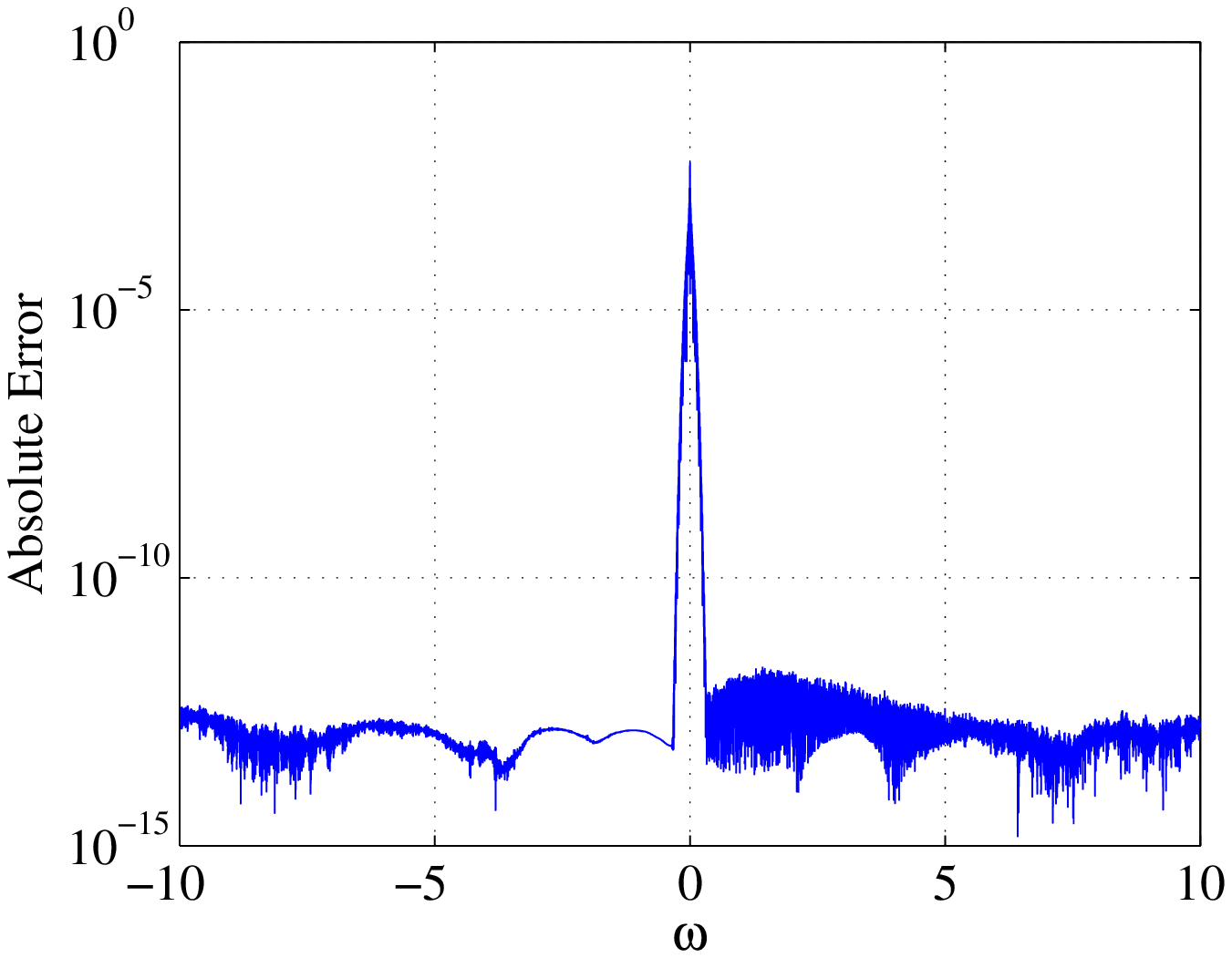}
\caption{Error for $F_{2}$ in~\eqref{eq:Ex2} 
 in the case of (B)-(b) in~\eqref{eq:RangeSet} and~\eqref{eq:ErrSet}.}
\label{fig:Ex2_01_10_m6}
\end{minipage}

\begin{minipage}{0.45\linewidth}
\includegraphics[width=.9\linewidth]{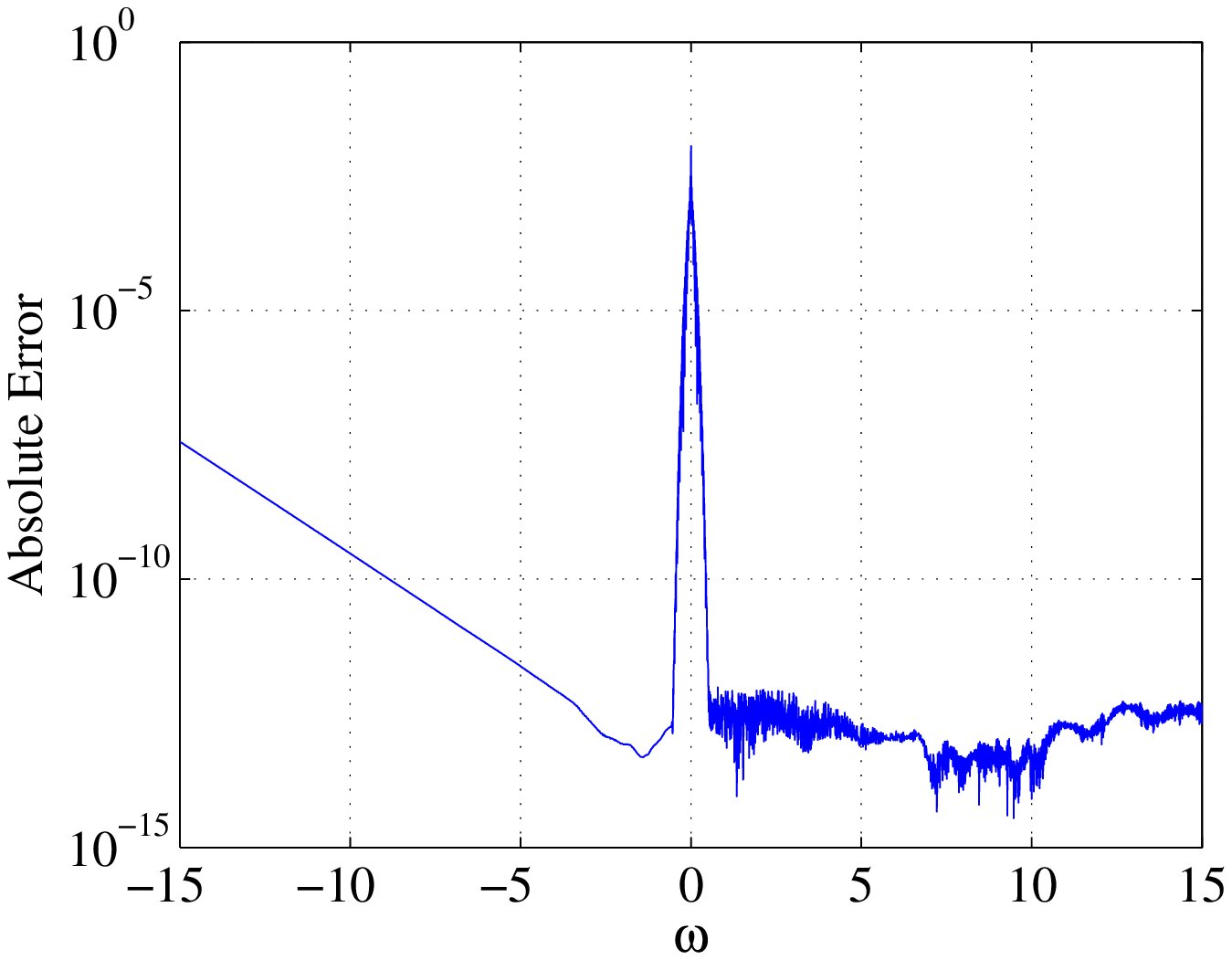}
\caption{Error for $F_{2}$ in~\eqref{eq:Ex2} 
 in the case of (C)-(a) in~\eqref{eq:RangeSet} and~\eqref{eq:ErrSet}.}
\label{fig:Ex2_01_15_m3}
\end{minipage}
\quad  
\begin{minipage}{0.45\linewidth}
\includegraphics[width=.9\linewidth]{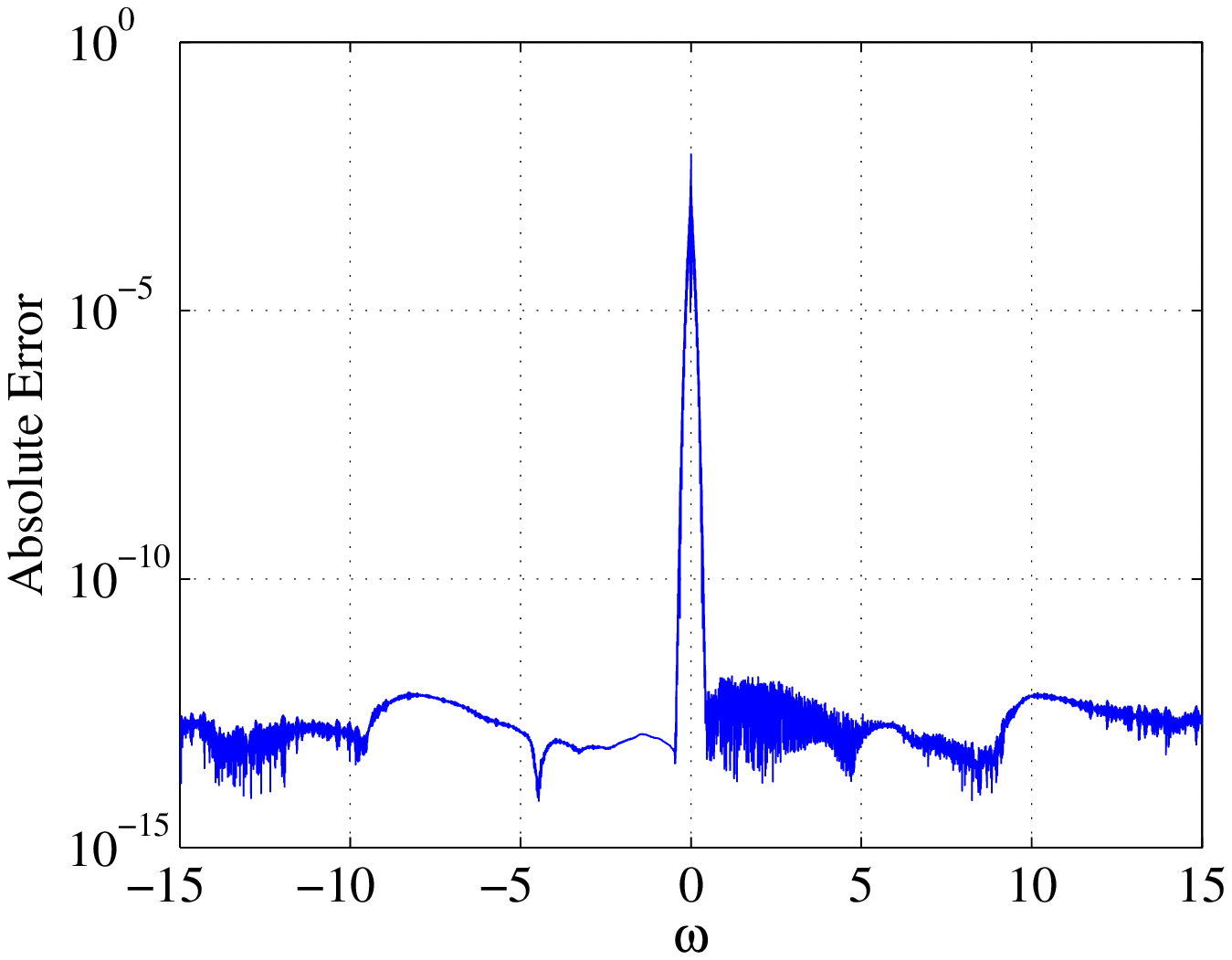}
\caption{Error for $F_{2}$ in~\eqref{eq:Ex2} 
 in the case of (C)-(b) in~\eqref{eq:RangeSet} and~\eqref{eq:ErrSet}.}
\label{fig:Ex2_01_15_m6}
\end{minipage}
\end{center}
\end{figure}

\begin{figure}[ht]
\begin{center}
\includegraphics[width=.432\linewidth]{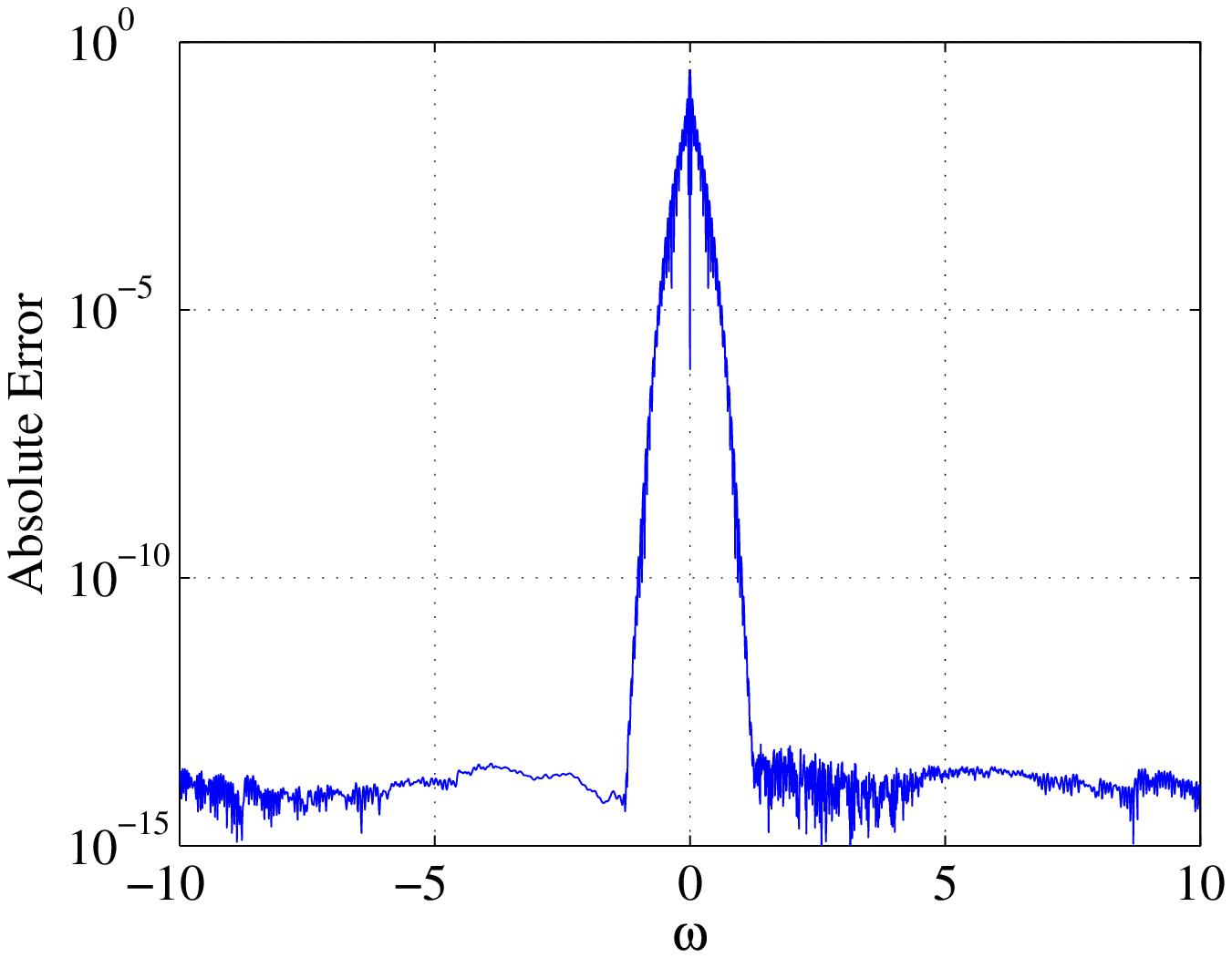}
\caption{Error for $G_{2}$ in~\eqref{eq:GaDist} 
 in the case of (A)-(a) in~\eqref{eq:RangeSet} and~\eqref{eq:ErrSet}.}
\label{fig:Ex3_02_10_m3}
\end{center}
\end{figure}

\newpage

\section{Proofs of Lemmas}
\label{sec:Proofs}

In this section, 
we present proofs of Lemmas~\ref{lem:DiscErr} and~\ref{lem:TrunErr}.
We begin with Lemma~\ref{lem:TrunErr} in Section~\ref{sec:LemTrun} because it is easier.
Then, Lemma~\ref{lem:DiscErr} is proved in Section~\ref{sec:LemDisc} and 
some other lemmas needed for Lemma~\ref{lem:DiscErr} are proved in 
Section~\ref{sec:SubLemProofs}.

\subsection{Proof of Lemma~\ref{lem:TrunErr}}
\label{sec:LemTrun}

Noting
\begin{align}
E_{w}^{(N, h)}(\omega)
& =
\left|
h \sum_{n<-N-1,\, N < n} w(|nh|; p, q) f(nh)\, \mathrm{e}^{-\mathrm{i}\, \omega\, nh}
\right|
\end{align}
and $Nh > pq$, we obtain the conclusion of Lemma~\ref{lem:TrunErr} as follows:
\begin{align}
E_{w}^{(N, h)}(\omega)
& \leq
M\, h \sum_{n > N} | w(|nh|; p, q) | 
\leq
2\, M\, h \sum_{n = N+1}^{\infty} \frac{1}{2} \exp \left\{ -\frac{(nh - pq)^{2}}{p^{2}}  \right\} \notag \\
& \leq
M\, \int_{N h}^{\infty}  \exp \left\{ -\frac{(t - pq)^{2}}{p^{2}} \right\} \mathrm{d}t 
\leq
M\, \exp \left\{ -\frac{(Nh- pq)^{2}}{p^{2}} \right\} 
\int_{0}^{\infty}  \exp \left\{ -\frac{s^{2}}{p^{2}} \right\} \mathrm{d}s \notag \\
& = 
\frac{\sqrt{\pi}\, p\, M}{2}\, \exp \left\{ -\frac{(Nh- pq)^{2}}{p^{2}} \right\}.
\end{align}

\subsection{Proof of Lemma~\ref{lem:DiscErr}}
\label{sec:LemDisc}

For $z \in \mathcal{D}_{d}$, 
let $w^{\mathrm{sym}}(z;p,q)$ be a function defined by
\begin{align}
w^{\mathrm{sym}}(z;p,q) 
=
\begin{cases}
w(z;p,q) & (\mathrm{Re}\, z \geq 0), \\
w(-z;p,q) & (\mathrm{Re}\, z < 0).
\end{cases}
\label{eq:w_sym}
\end{align}
Then, 
$F_{w}^{(\infty, h)}(\omega)$ in~\eqref{eq:F_w_infty_h} 
is rewritten in the form
\begin{align}
F_{w}^{(\infty, h)}(\omega)
& =
h \sum_{n=-\infty}^{\infty} w^{\mathrm{sym}}(nh; p, q) f(nh)\, \mathrm{e}^{-\mathrm{i}\, \omega\, nh}. 
\end{align}
To prove Lemma~\ref{lem:DiscErr}, we use the following fundamental fact.
\begin{lem}[Poisson summation formula {\cite[Theorem 1.3.1]{bib:StengerSinc2011}}]
\label{lem:PoissonSum}
Let $g \in L^{2}(\mathbf{R})$, 
and let $g$ and its Fourier transform $\hat{g}$ satisfy the conditions
\begin{align}
g(x) & = \lim_{t \to +0} \frac{g(x - t) + g(x + t)}{2}, \label{eq:PoAssump_1} \\
\hat{g}(\omega) &= \lim_{t \to +0} \frac{\hat{g}(\omega - t) + \hat{g}(\omega + t)}{2} \label{eq:PoAssump_2}
\end{align}
for $t$, $x$ and $\omega$ on $\mathbf{R}$. 
Then, for $\omega$ satisfying \eqref{eq:PoAssump_2} and $h > 0$, the following holds:
\begin{align}
h \sum_{n = -\infty}^{\infty} g(nh)\, \mathrm{e}^{-\mathrm{i}\, \omega\, n\, h}
=
\sum_{m = -\infty}^{\infty} \hat{g}\left( \frac{2\pi m}{h} - \omega \right).
\label{eq:PoissonSum}
\end{align}
\end{lem}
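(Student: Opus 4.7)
The plan is to give the classical periodization proof of Poisson summation and to absorb the $\mathrm{e}^{-\mathrm{i}\omega x}$ factor by a simple reduction. Setting $\phi(x) = g(x)\,\mathrm{e}^{-\mathrm{i}\omega x}$, we have $\phi \in L^{2}(\mathbf{R})$ since $|\mathrm{e}^{-\mathrm{i}\omega x}|=1$, and its Fourier transform is just a translate of $\hat{g}$. This reduces the statement to the standard form of Poisson summation for $\phi$, and the asserted identity follows after undoing the translation and re-indexing $m \mapsto -m$.

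For the core identity, I would form the periodization $\Phi(x) = \sum_{k\in\mathbf{Z}} \phi(x+kh)$. A short Minkowski--Fubini estimate shows that $\Phi \in L^{2}([0,h])$ and that it is $h$-periodic. Expanding $\Phi$ in a Fourier series on $[0,h]$ and computing the $m$th coefficient by unfolding the integral over one period into one over all of $\mathbf{R}$ identifies it as $h^{-1}\hat{\phi}(2\pi m/h)$. Evaluating both the defining sum for $\Phi$ and its Fourier series at $x=0$, and multiplying through by $h$, then yields the Poisson formula.

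The main obstacle is not computational but analytic: pointwise evaluation at $x=0$. Both the periodization series and its Fourier expansion are guaranteed to converge only in $L^{2}$ a priori, so one must justify reading them off at a single point. This is exactly what hypotheses \eqref{eq:PoAssump_1} and \eqref{eq:PoAssump_2} are designed for: they assert that $g$ and $\hat{g}$ coincide with their symmetric averages at the relevant points, which is precisely the value produced by symmetric partial sums of a (possibly discontinuous) function or Fourier series. Once a Dirichlet/Jordan-type pointwise convergence theorem is invoked to close this gap, the remaining steps, namely the interchange of summation and integration in the coefficient computation, are routine applications of Fubini given the $L^{2}$ framework and the absolute convergence of the translates after periodization.
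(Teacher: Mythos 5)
Your overall strategy (modulation to absorb $\mathrm{e}^{-\mathrm{i}\,\omega x}$, then periodization and Fourier expansion) is the classical route; note for comparison that the paper itself offers no proof of this lemma, quoting it from Stenger. However, as written your argument has a genuine gap at its first analytic step. For $\phi\in L^{2}(\mathbf{R})$ the periodization $\Phi(x)=\sum_{k}\phi(x+kh)$ need not converge, and the Minkowski estimate you invoke does not close: it bounds $\|\Phi\|_{L^{2}([0,h])}$ by $\sum_{k}\|\phi\|_{L^{2}([kh,(k+1)h])}$, an $\ell^{1}$-sum of quantities that the hypothesis $\phi\in L^{2}(\mathbf{R})$ only makes square-summable. (Take $\phi(x)=(1+|x|)^{-3/4}$: it is in $L^{2}$ but its periodization diverges everywhere.) Consequently the ``absolute convergence of the translates after periodization'' on which your Fubini/unfolding step rests is not available under the stated hypotheses; the periodization proof runs cleanly only for $g\in L^{1}(\mathbf{R})$ or in an amalgam space such as $W(L^{2},\ell^{1})$. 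The second gap is the pointwise evaluation at $x=0$: conditions \eqref{eq:PoAssump_1}--\eqref{eq:PoAssump_2} merely normalize $g$ and $\hat{g}$ to their symmetric averages; they supply no regularity (such as the bounded variation required by Dirichlet--Jordan) forcing the Fourier series of $\Phi$ to converge at $0$ to that value. The step you describe as ``invoking a Dirichlet/Jordan-type theorem'' is therefore precisely the step that cannot be invoked from the hypotheses as stated, which is why careful $L^{2}$ versions of the Poisson formula carry extra assumptions or interpret both sides as limits of symmetric partial sums.

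A further detail: with the paper's transform convention your reduction yields $h\sum_{n}g(nh)\,\mathrm{e}^{-\mathrm{i}\,\omega nh}=\sum_{m}\hat{g}(2\pi m/h+\omega)$, and re-indexing $m\mapsto-m$ gives $\sum_{m}\hat{g}(\omega-2\pi m/h)$, not $\sum_{m}\hat{g}(2\pi m/h-\omega)$ as in \eqref{eq:PoissonSum}; the lattices $\omega+(2\pi/h)\mathbf{Z}$ and $-\omega+(2\pi/h)\mathbf{Z}$ differ unless $\hat{g}$ is even. This is a sign-convention mismatch inherited from the cited source (and harmless for the paper's subsequent bounds, which depend only on $|2\pi m/h|-|\omega|$), but your claim that re-indexing ``yields the asserted identity'' should be corrected rather than asserted. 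Finally, in the paper's actual application $g=w^{\mathrm{sym}}(\cdot\,;p,q)f$ has Gaussian-type decay, hence lies in $L^{1}(\mathbf{R})$ and is of bounded variation, and there your periodization argument would go through essentially verbatim.
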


A function $g$ defined by $g(z) = w^{\mathrm{sym}}(z; p, q) f(z)$
belongs to $L^{2}(\mathbf{R})$ and continuous on $\mathbf{R}$, 
and its Fourier transform $\hat{g}$ is continuous on $\mathbf{R}$. 
Then, we can use Lemma~\ref{lem:PoissonSum} to yield
\begin{align}
F_{w}(\omega) - F_{w}^{(\infty, h)}(\omega)
=
\hat{g}(\omega) - 
h \sum_{n = -\infty}^{\infty} g(nh)\, \mathrm{e}^{-\mathrm{i}\, \omega\, n\, h}
=
- \sum_{m \neq 0} \hat{g}\left( \frac{2\pi m}{h} - \omega \right).
\label{eq:EstimByPoissonSum}
\end{align}
If the function $\hat{g}$ decays exponentially as 
$|\hat{g}(\omega)| = \mathrm{O}(\exp(-d\, |\omega|))\ ( \omega \to \pm \infty)$, 
the conclusion of Lemma~\ref{lem:DiscErr} may follow immediately. 
The exponential decay of $\hat{g}$, however, is not so straightforwardly shown
because $w^{\mathrm{sym}}(z; p, q)$ is not analytic on $\mathcal{D}_{d}$. 
Then, in the following, 
we apply smoothing technique to $w^{\mathrm{sym}}(z; p, q)$ 
to yield an analytic function $g_{\varepsilon}$ approximating $g$, 
and apply Lemma~\ref{lem:PoissonSum} to $g_{\varepsilon}$.


For sufficiently small $\varepsilon > 0$, 
we define $w^{\mathrm{sym}}_{\varepsilon}(z; p, q)$ by
\begin{align}
w^{\mathrm{sym}}_{\varepsilon}(z; p, q)
=
\int_{-\infty}^{\infty} 
u_{\varepsilon}(z-y)\,
w^{\mathrm{sym}}(y; p, q)\, \mathrm{d}y,
\label{eq:w_sym_eps}
\end{align}
where
\begin{align}
u_{\varepsilon}(z) 
= 
\frac{1}{\sqrt{2\pi}\, \varepsilon} \exp\left( - \frac{z^{2}}{2\, \varepsilon^{2}} \right), 
\label{eq:u_e}
\end{align}
and consider $g_{\varepsilon}(z) = w^{\mathrm{sym}}_{\varepsilon}(z; p, q) f(z)$.
Using $w^{\mathrm{sym}}_{\varepsilon}(z; p, q)$, 
we estimate $E_{w}^{(\infty, h)}(\omega)$
 as follows:
\begin{align}
E_{w}^{(\infty, h)}(\omega) 
= | F_{w}(\omega) - F_{w}^{(\infty, h)}(\omega) | 
\leq 
E_{w, \varepsilon, \mathrm{int}}^{(\infty, h)}(\omega)
+ E_{w, \varepsilon, \mathrm{sum}}^{(\infty, h)}(\omega)
+ E_{w, \varepsilon}^{(\infty, h)}(\omega),
\label{eq:Div_E_w}
\end{align}
where
\begin{align}
E_{w, \varepsilon, \mathrm{int}}^{(\infty, h)}(\omega)
& = 
\left|
\int_{-\infty}^{\infty}
(w^{\mathrm{sym}}_{\varepsilon}(x; p, q) - w^{\mathrm{sym}}(x; p, q))\, 
f(x)
\, \mathrm{e}^{-\mathrm{i}\, \omega\, x}
\, \mathrm{d}x
\right|, \\
E_{w, \varepsilon, \mathrm{sum}}^{(\infty, h)}(\omega)
& = 
\left|
h \sum_{n=-\infty}^{\infty} 
(w^{\mathrm{sym}}_{\varepsilon}(nh; p, q) - w^{\mathrm{sym}}(nh; p, q))\, 
f(nh)
\, \mathrm{e}^{-\mathrm{i}\, \omega\, nh}
\right|, \\
E_{w, \varepsilon}^{(\infty, h)}(\omega)
& = 
\left|
\int_{-\infty}^{\infty} w^{\mathrm{sym}}_{\varepsilon}(x; p, q)\, f(x)\, \mathrm{d}x
-
h \sum_{n=-\infty}^{\infty} 
w^{\mathrm{sym}}_{\varepsilon}(nh; p, q) f(nh)\, \mathrm{e}^{-\mathrm{i}\, \omega\, nh}
\right|.
\end{align}
To estimate these, we use the following three lemmas, 
whose proofs are shown in Section~\ref{sec:SubLemProofs} later.

\begin{lem}
\label{lem:w_e_analytic}
The function $w^{\mathrm{sym}}_{\varepsilon}(z; p, q)$ is analytic on $\mathbf{C}$. 
\end{lem}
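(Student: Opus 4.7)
The plan is to prove analyticity by recognizing $w^{\mathrm{sym}}_{\varepsilon}(\cdot; p, q)$ as a holomorphic parameter integral: the Gaussian kernel $u_{\varepsilon}(z-y)$ is entire in $z$ for each fixed real $y$, and $w^{\mathrm{sym}}(y; p, q)$ is bounded (in fact $|w^{\mathrm{sym}}(y; p, q)| \leq 1$ since it is $\tfrac{1}{2}\mathop{\mathrm{erfc}}$ of a real argument). So the strategy is to check that we may differentiate under the integral sign (or invoke Morera/Fubini) on every compact subset of $\mathbf{C}$.

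The concrete steps I would carry out, in order, are as follows. First, observe that for any fixed $y \in \mathbf{R}$ the map $z \mapsto u_{\varepsilon}(z-y)$ is entire, so it suffices to establish a locally integrable majorant. Second, for $z = x + \mathrm{i}\, s$ with $|z| \leq R$, expand
\begin{align}
| u_{\varepsilon}(z-y) |
= \frac{1}{\sqrt{2\pi}\, \varepsilon}
\exp\!\left( -\frac{(x-y)^{2} - s^{2}}{2\, \varepsilon^{2}} \right)
\leq
\frac{\mathrm{e}^{R^{2}/(2\varepsilon^{2})}}{\sqrt{2\pi}\, \varepsilon}
\exp\!\left( -\frac{(x-y)^{2}}{2\, \varepsilon^{2}} \right),
\end{align}
which, together with $|w^{\mathrm{sym}}(y; p, q)| \leq 1$, yields an integrable majorant in $y$ uniformly for $z$ in the closed disk $\{|z| \leq R\}$. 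Third, conclude analyticity either by Morera's theorem (Fubini swaps the line integral over any triangle $\Gamma \subset \{|z|\leq R\}$ with the integral in $y$, reducing the question to $\oint_{\Gamma} u_{\varepsilon}(z-y)\,\mathrm{d}z = 0$, which holds because $u_{\varepsilon}(\cdot - y)$ is entire) or, equivalently, by verifying the hypotheses of the standard theorem on differentiation under the integral sign (the derivative kernel $\partial_{z} u_{\varepsilon}(z-y) = -\frac{z-y}{\varepsilon^{2}} u_{\varepsilon}(z-y)$ admits the same type of Gaussian majorant after absorbing the factor $|z-y|$ into a slightly enlarged Gaussian).

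Since $R > 0$ was arbitrary, $w^{\mathrm{sym}}_{\varepsilon}(\cdot; p, q)$ is holomorphic on all of $\mathbf{C}$. I do not anticipate a serious obstacle: the only mild subtlety is justifying the swap of integration and contour integration (for Morera) or of differentiation and integration, but the Gaussian decay of $u_{\varepsilon}$ together with the uniform boundedness of $w^{\mathrm{sym}}$ gives a clean dominated-convergence argument on every compact set. The boundedness of $w^{\mathrm{sym}}$, which follows directly from the erfc representation in~\eqref{eq:def_w}, is the essential input that makes the majorant work without any appeal to the analyticity of $f$ or to the structure of $w$ beyond $L^{\infty}(\mathbf{R})$.
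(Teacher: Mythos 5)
Your argument is correct, but it proceeds along a genuinely different route from the paper. You work entirely on the $z$-side: you use that $z \mapsto u_{\varepsilon}(z-y)$ is entire for each fixed real $y$, that $0 \leq w^{\mathrm{sym}}(y;p,q) \leq 1$ on $\mathbf{R}$, and that the Gaussian kernel supplies a locally uniform integrable majorant, so that Morera plus Fubini (or differentiation under the integral sign) yields entirety of the convolution. The paper instead passes to the frequency side: it invokes the convolution theorem to write $\hat{w}^{\mathrm{sym}}_{\varepsilon}(\omega;p,q) = \exp(-\varepsilon^{2}\omega^{2}/2)\, \hat{w}^{\mathrm{sym}}(\omega;p,q)$, notes that $\hat{w}^{\mathrm{sym}}$ is uniformly bounded (since $w^{\mathrm{sym}}$ itself decays Gaussianly and hence is in $L^{1}(\mathbf{R})$), and concludes that the inverse-transform integral and its $\omega$-differentiated version converge absolutely for every complex $z$, giving complex differentiability. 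Your approach is the more elementary and self-contained one: it needs only $w^{\mathrm{sym}} \in L^{\infty}(\mathbf{R})$ and nothing about its Fourier transform, and it generalizes immediately to mollification of any bounded measurable function. The paper's approach buys a representation of $\hat{w}^{\mathrm{sym}}_{\varepsilon}$ with explicit Gaussian decay, which is structurally aligned with the decay estimates used later in the proof of Lemma~\ref{lem:DiscErr}, at the cost of implicitly relying on Fourier inversion extended to complex arguments. One cosmetic point in your write-up: the displayed majorant $\mathrm{e}^{R^{2}/(2\varepsilon^{2})}(\sqrt{2\pi}\,\varepsilon)^{-1}\exp(-(x-y)^{2}/(2\varepsilon^{2}))$ still depends on $x = \mathrm{Re}\, z$; to get a single dominating function of $y$ valid for all $|z| \leq R$ you should further bound $(x-y)^{2} \geq (|y|-R)^{2}$ for $|y| \geq R$ (and use the trivial bound $1$ for $|y| < R$). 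This is an immediate fix and does not affect the validity of the argument.
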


\begin{lem}
\label{lem:w_e_int_zero}
The difference 
$w^{\mathrm{sym}}_{\varepsilon}(z; p, q) - w^{\mathrm{sym}}(z; p, q)$
is absolutely integrable on $\{ t \pm d\, \mathrm{i} \mid t \in \mathbf{R} \}$ and 
\begin{align}
\lim_{\varepsilon \to +0} 
\int_{-\infty}^{\infty}
\left| 
w^{\mathrm{sym}}_{\varepsilon}(t \pm d\, \mathrm{i}; p, q) 
- w^{\mathrm{sym}}(t \pm d\, \mathrm{i}; p, q) 
\right|\, 
\mathrm{d}t = 0.
\end{align}
\end{lem}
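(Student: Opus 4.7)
My plan is to decompose the defining convolution of $w^{\mathrm{sym}}_{\varepsilon}$ so as to exploit the piecewise-analytic structure of $w^{\mathrm{sym}}$, and then deform the $y$-contour up to the horizontal line $\mathrm{Im}\,y = d$ so that it passes through the target point $z = t + d\,\mathrm{i}$. (The case $\mathrm{Im}\,z = -d$ will be handled by the analogous argument with the shift reversed.) First I would write
\begin{align*}
w^{\mathrm{sym}}_{\varepsilon}(z)
= \int_{0}^{\infty} u_{\varepsilon}(z - y)\, w(y; p, q)\, \mathrm{d}y
+ \int_{-\infty}^{0} u_{\varepsilon}(z - y)\, w(-y; p, q)\, \mathrm{d}y,
\end{align*}
and then, since $u_{\varepsilon}$ and the entire functions $w(\pm y; p, q)$ are holomorphic in $y$, I would shift each half-line contour to $\mathrm{Im}\,y = d$ by Cauchy's theorem, picking up two vertical boundary segments along $[0, d\,\mathrm{i}]$. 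Closing at $|\mathrm{Re}\,y| \to \infty$ is justified because $u_{\varepsilon}$ decays like a Gaussian on horizontal strips, and $\mathrm{erfc}(\cdot/p - q)$ decays like a Gaussian in its right half-plane.

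After this deformation, at $z = t + d\,\mathrm{i}$ I expect a representation of the form
\begin{align*}
w^{\mathrm{sym}}_{\varepsilon}(t + d\,\mathrm{i}) = (u_{\varepsilon} \ast W_{d})(t) + B_{\varepsilon}(t),
\end{align*}
where $W_{d}(s) := w^{\mathrm{sym}}(s + d\,\mathrm{i})$ belongs to $L^{1}(\mathbf{R}) \cap L^{\infty}(\mathbf{R})$ (the Gaussian decay of $\mathrm{erfc}(\cdot/p - q)$ in its right half-plane is preserved at height $d$), the first term is the standard real-line convolution, and $B_{\varepsilon}(t)$ collects the two vertical boundary integrals. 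For the bulk term, the classical approximate-identity theorem for the Gaussian mollifier $u_{\varepsilon}$ immediately gives $\| u_{\varepsilon} \ast W_{d} - W_{d} \|_{L^{1}(\mathbf{R})} \to 0$ as $\varepsilon \to +0$. Combined with the evident finiteness of $\| B_{\varepsilon} \|_{L^{1}(\mathbf{R})}$ for each fixed $\varepsilon > 0$, this also yields the absolute integrability of $w^{\mathrm{sym}}_{\varepsilon} - w^{\mathrm{sym}}$ on the line.

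The hard part will be showing $\| B_{\varepsilon} \|_{L^{1}(\mathbf{R})} \to 0$ as $\varepsilon \to +0$. Explicitly,
\begin{align*}
B_{\varepsilon}(t) = \mathrm{i}\int_{0}^{d} u_{\varepsilon}(t + (d - \tau)\,\mathrm{i})\,
[\, w(\mathrm{i}\tau; p, q) - w(-\mathrm{i}\tau; p, q)\,]\, \mathrm{d}\tau,
\end{align*}
and the difficulty is that $|u_{\varepsilon}(t + \sigma\,\mathrm{i})|$ carries a prefactor $\exp(\sigma^{2}/(2\varepsilon^{2}))$ that blows up as $\varepsilon \to 0$, so a crude triangle-inequality bound in $t$ diverges exponentially. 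The only available source of smallness is the rapidly oscillating phase $\exp(-\mathrm{i}\,t\,\sigma/\varepsilon^{2})$ of $u_{\varepsilon}(t + \sigma\,\mathrm{i})$, together with the fact that the ``jump factor'' $w(\mathrm{i}\tau; p, q) - w(-\mathrm{i}\tau; p, q)$ vanishes at $\tau = 0$ (both pieces equal $\frac{1}{2}\mathrm{erfc}(-q)$ there) and is smooth in $\tau$. A natural strategy is to complete the square in the exponent of $u_{\varepsilon}$ so that the inner kernel becomes a shifted Gaussian in $t$ against a pure phase, and then deform the $\tau$-contour in $\mathbf{C}$ toward the saddle point of that phase in order to bound the boundary integral by a positive power of $\varepsilon$. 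This stationary-phase / contour-deformation step is by far the most delicate part of the argument.
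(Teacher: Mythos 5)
Your contour deformation is carried out correctly and your formula for the boundary term $B_{\varepsilon}$ is exactly the discrepancy between the analytic continuation of the real-line mollification \eqref{eq:w_sym_eps} and the convolution of $W_{d}$ along the line $\mathrm{Im}\,z=d$; the bulk term is handled correctly by the approximate-identity theorem. The gap is that the entire proof rests on $\| B_{\varepsilon} \|_{L^{1}(\mathbf{R})} \to 0$, for which you offer only a stationary-phase \emph{plan}, and that plan cannot succeed: the claim is false. Using $\int_{\mathbf{R}} u_{\varepsilon}(t+\rho\,\mathrm{i})\,\mathrm{e}^{-\mathrm{i}\omega t}\,\mathrm{d}t = \mathrm{e}^{-\rho\omega-\varepsilon^{2}\omega^{2}/2}$, the Fourier transform of your boundary term is $\hat{B}_{\varepsilon}(\omega)=\mathrm{i}\,\mathrm{e}^{-\varepsilon^{2}\omega^{2}/2}\int_{0}^{d}\mathrm{e}^{-(d-\tau)\omega}J(\tau)\,\mathrm{d}\tau$ with $J(\tau)=w(\mathrm{i}\tau;p,q)-w(-\mathrm{i}\tau;p,q)$. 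You correctly note $J(0)=0$, but $J'(0)=2\,\mathrm{i}\,w'(0)=-2\,\mathrm{i}\,\mathrm{e}^{-q^{2}}/(p\sqrt{\pi})\neq 0$, so Watson's lemma at $\omega=-d/\varepsilon^{2}$ gives $|\hat{B}_{\varepsilon}(-d/\varepsilon^{2})| \sim (2\,\mathrm{e}^{-q^{2}}/(p\sqrt{\pi}\,d^{2}))\,\varepsilon^{4}\,\mathrm{e}^{d^{2}/(2\varepsilon^{2})}\to\infty$; since $\|B_{\varepsilon}\|_{L^{1}}\geq\sup_{\omega}|\hat{B}_{\varepsilon}(\omega)|$, the boundary term diverges rather than vanishes. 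The underlying reason is that $\widehat{w^{\mathrm{sym}}}(\omega)$ decays only like $\omega^{-2}$ (the weight $w(|y|;p,q)$ has a corner at $y=0$), so the entire function $u_{\varepsilon}\ast w^{\mathrm{sym}}$ blows up like $\varepsilon^{3}\mathrm{e}^{(d^{2}-t^{2})/(2\varepsilon^{2})}$ on $|t|<d$; no amount of oscillatory cancellation in $\tau$ can rescue an $L^{1}_{t}$ bound.

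The paper's proof never encounters your $B_{\varepsilon}$ because it takes a different and much more elementary route: after the substitution $y=z-\varepsilon v$ it works with $\int_{\mathbf{R}} u_{1}(v)\,w^{\mathrm{sym}}(t-\varepsilon v\pm d'\mathrm{i};p,q)\,\mathrm{d}v$, i.e.\ it reads the mollification on the line $\mathrm{Im}\,z=d'$ as a convolution \emph{along that horizontal line}, and then bounds $|w^{\mathrm{sym}}(t-\varepsilon v\pm d'\mathrm{i})-w^{\mathrm{sym}}(t\pm d'\mathrm{i})|$ directly from the integral representation of $\mathop{\mathrm{erfc}}$, splitting into the cases where $t$ and $t-\varepsilon v$ lie on the same or opposite sides of $0$; this yields an $O(\varepsilon)$ bound in $L^{1}(\mathrm{d}t)$ with no oscillatory analysis. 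For that reading of $w^{\mathrm{sym}}_{\varepsilon}$ the lemma is true and easy, and your deformation shows that it differs from the analytic continuation of \eqref{eq:w_sym_eps} by exactly $B_{\varepsilon}$ --- so the two readings genuinely disagree off the real axis (which is itself a point of tension with Lemma~\ref{lem:w_e_analytic}, where analyticity is derived from the real-line convolution). To produce a correct proof of the statement as the paper uses it, you should abandon the deformation and the stationary-phase program, adopt the horizontal-convolution representation, and estimate the integrand pointwise as above.
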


\begin{lem}
\label{lem:w_e_sum_zero}
For any $h>0$, the following holds:
\begin{align}
\lim_{\varepsilon \to +0} 
h
\sum_{n = -\infty}^{\infty}
\left| 
w^{\mathrm{sym}}_{\varepsilon}(n h ; p, q) 
- w^{\mathrm{sym}}(n h ; p, q) 
\right|
 = 0.
\end{align}
\end{lem}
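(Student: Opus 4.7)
The plan is to apply the dominated convergence theorem on $\mathbf{Z}$ equipped with the counting measure to the sequence
\begin{align*}
a_{n}(\varepsilon) := \left| w^{\mathrm{sym}}_{\varepsilon}(nh;p,q) - w^{\mathrm{sym}}(nh;p,q) \right|.
\end{align*}
Two ingredients are needed: (i) pointwise convergence $a_{n}(\varepsilon) \to 0$ as $\varepsilon \to +0$ for each fixed $n$, and (ii) an $\varepsilon$-uniform summable majorant $a_{n}(\varepsilon) \leq M_{n}$ valid for all $\varepsilon \in (0, 1]$ with $h\sum_{n} M_{n} < \infty$.

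Step (i) is a standard mollifier argument. Since $w^{\mathrm{sym}}(y;p,q) = \tfrac{1}{2}\mathop{\mathrm{erfc}}(|y|/p - q)$ is continuous and bounded on $\mathbf{R}$, and $u_{\varepsilon}$ is a Gaussian approximate identity with $\int_{-\infty}^{\infty} u_{\varepsilon} = 1$, the convolution $w^{\mathrm{sym}}_{\varepsilon}(x;p,q)$ converges to $w^{\mathrm{sym}}(x;p,q)$ at every $x \in \mathbf{R}$ as $\varepsilon \to +0$.

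For step (ii), the trivial bound $a_{n}(\varepsilon) \leq 1$ is not summable, so I would exploit the Gaussian decay of both $w^{\mathrm{sym}}$ and its mollification. The inequality $\mathop{\mathrm{erfc}}(t) \leq \mathrm{e}^{-t^{2}}$ for $t \geq 0$ gives $|w^{\mathrm{sym}}(x;p,q)| \leq \tfrac{1}{2}\mathrm{e}^{-(|x|/p - q)^{2}}$ whenever $|x| > pq$. For $w^{\mathrm{sym}}_{\varepsilon}$, after substituting $y = x - \varepsilon t$ in~\eqref{eq:w_sym_eps},
\begin{align*}
w^{\mathrm{sym}}_{\varepsilon}(x;p,q)
= \int_{-\infty}^{\infty} \frac{1}{\sqrt{2\pi}}\, \mathrm{e}^{-t^{2}/2}\, w^{\mathrm{sym}}(x - \varepsilon t;p,q)\, \mathrm{d}t,
\end{align*}
I would split the integral at $|\varepsilon t| = |x|/2$ for $|x| > 2pq$: on the inner range $|x - \varepsilon t| \geq |x|/2$, so the integrand is dominated by $\tfrac{1}{2}\mathrm{e}^{-(|x|/(2p) - q)^{2}}$; on the outer range $|t| > |x|/(2\varepsilon)$, the crude bound $w^{\mathrm{sym}} \leq 1/2$ combined with a standard Gaussian tail estimate contributes a factor $\mathrm{e}^{-|x|^{2}/(8\varepsilon^{2})} \leq \mathrm{e}^{-|x|^{2}/8}$ whenever $\varepsilon \leq 1$. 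The sum of these two Gaussian-decaying bounds, together with the direct bound on $w^{\mathrm{sym}}$ itself, furnishes a majorant $M_{n}$ for $a_{n}(\varepsilon)$ summable over $n$ (with only finitely many indices in the region $|nh| \leq 2pq$, where the uniform bound $a_{n}(\varepsilon) \leq 1$ is used).

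Combining (i) and (ii), dominated convergence yields the conclusion. The main obstacle is step (ii): producing an $\varepsilon$-independent Gaussian tail for the mollified weight $w^{\mathrm{sym}}_{\varepsilon}$. Restricting to $\varepsilon \in (0, 1]$ (harmless since we take $\varepsilon \to +0$) is precisely what decouples the constants in the outer-range estimate from $\varepsilon$.
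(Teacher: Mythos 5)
Your proof is correct, but it takes a genuinely different route from the paper's. The paper argues quantitatively: it writes
\begin{align*}
w^{\mathrm{sym}}_{\varepsilon}(t;p,q)-w^{\mathrm{sym}}(t;p,q)
=\int_{-\infty}^{\infty} u_{1}(v)\,\bigl(w^{\mathrm{sym}}(t-\varepsilon v;p,q)-w^{\mathrm{sym}}(t;p,q)\bigr)\,\mathrm{d}v ,
\end{align*}
observes that the inner difference is an integral of $\mathrm{e}^{-s^{2}}$ over an interval of length $\varepsilon |v|/p$, and thereby gets a pointwise bound of order $\varepsilon|v|/(\sqrt{\pi}\,p)$ multiplied by a factor equal to $1$ on a bounded set and Gaussian-decaying outside (the estimates \eqref{eq:w_estim_aug2}, \eqref{eq:w_estim_aug3}, \eqref{eq:w_estim_aug1_real}). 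Summing over $n$ by comparison with an integral and then integrating in $v$ yields an explicit bound that is linear in $\varepsilon$. Your argument is instead qualitative: pointwise convergence of the mollification (standard, since $w^{\mathrm{sym}}$ is bounded and continuous) plus an $\varepsilon$-uniform summable majorant, obtained by splitting the convolution at $|\varepsilon t|=|x|/2$ and using $\mathop{\mathrm{erfc}}(t)\leq \mathrm{e}^{-t^{2}}$ on the inner range and the Gaussian tail of $u_{1}$ on the outer range, followed by dominated convergence with respect to the counting measure. Both are sound. What the paper's route buys is an explicit $\mathrm{O}(\varepsilon)$ rate and reuse of the difference estimates already derived for Lemma~\ref{lem:w_e_int_zero}; what yours buys is a shorter, more modular argument that avoids those pointwise difference estimates, at the cost of losing the rate --- which the statement of the lemma does not require. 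Two cosmetic points: the crude bound on $w^{\mathrm{sym}}$ in the outer range should be $1$ rather than $1/2$ (since $\mathop{\mathrm{erfc}}$ takes values in $[0,2]$), and the majorant must dominate $w^{\mathrm{sym}}_{\varepsilon}(nh;p,q)+w^{\mathrm{sym}}(nh;p,q)$ via the triangle inequality, as you indicate; neither affects validity.
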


Now we are in position to prove Lemma~\ref{lem:DiscErr}.

\begin{proof}[Proof of Lemma~\ref{lem:DiscErr}]
As for $E_{w, \varepsilon, \mathrm{int}}^{(\infty, h)}(\omega)$
and $E_{w, \varepsilon, \mathrm{sum}}^{(\infty, h)}(\omega)$, 
we note that
\begin{align}
E_{w, \varepsilon, \mathrm{int}}^{(\infty, h)}(\omega)
& \leq 
M \int_{-\infty}^{\infty}
\left| 
w^{\mathrm{sym}}_{\varepsilon}(x; p, q) 
- w^{\mathrm{sym}}(x; p, q) 
\right|\, 
\mathrm{d}x, \\
E_{w, \varepsilon, \mathrm{sum}}^{(\infty, h)}(\omega)
& \leq 
M\, h \sum_{n = -\infty}^{\infty}
\left| 
w^{\mathrm{sym}}_{\varepsilon}(n h ; p, q) 
- w^{\mathrm{sym}}(n h ; p, q) 
\right|. 
\end{align}
From these and Lemmas~\ref{lem:w_e_int_zero} and~\ref{lem:w_e_sum_zero}, 
we can make $E_{w, \varepsilon, \mathrm{int}}^{(\infty, h)}(\omega)$
and $E_{w, \varepsilon, \mathrm{sum}}^{(\infty, h)}(\omega)$ arbitrarily small
by choosing~$\varepsilon$ independently of~$\omega$. 

As for $E_{w, \varepsilon}^{(\infty, h)}(\omega)$, 
we use Lemmas~\ref{lem:PoissonSum}, \ref{lem:w_e_analytic} and~\ref{lem:w_e_int_zero}.
Instead of~\eqref{eq:EstimByPoissonSum},  
for $g_{\varepsilon}(z) = w^{\mathrm{sym}}_{\varepsilon}(z; p, q) f(z)$, we use 
\begin{align}
\hat{g}_{\varepsilon}(\omega) - 
h \sum_{n = -\infty}^{\infty} g_{\varepsilon}(nh)\, \mathrm{e}^{-\mathrm{i}\, \omega\, n\, h}
=
- \sum_{m \neq 0} \hat{g}_{\varepsilon}\left( \frac{2\pi m}{h} - \omega \right)
\label{eq:EstimByPoissonSum_eps}
\end{align}
for the estimate. 
For $\omega \geq 0$, 
Lemma~\ref{lem:w_e_analytic} allows us 
to move the path of the integral defining $\hat{g}_{\varepsilon}$ as follows:
\begin{align}
\hat{g}_{\varepsilon}(\omega)
= 
\int_{-\infty}^{\infty}
g_{\varepsilon}(x)\, \mathrm{e}^{-\mathrm{i}\, \omega\, x}\, \mathrm{d}x
=
\int_{-\infty}^{\infty}
g_{\varepsilon}(t - d\, \mathrm{i})\, \mathrm{e}^{-\mathrm{i}\, \omega\, (t - d\, \mathrm{i}) }\, \mathrm{d}t
=
\mathrm{e}^{-\omega\, d}
\int_{-\infty}^{\infty}
g_{\varepsilon}(t - d\, \mathrm{i})\, \mathrm{e}^{-\mathrm{i}\, \omega\, t}\, \mathrm{d}t.
\end{align}
Since a similar argument is possible for $\omega < 0$, 
we have
\begin{align}
| \hat{g}_{\varepsilon}(\omega) |
\leq
\mathrm{e}^{-|\omega|\, d}
\max_{s \in \{-1, +1\}}
\left|
\int_{-\infty}^{\infty}
g_{\varepsilon}(t + s\, d\, \mathrm{i})\, \mathrm{e}^{-\mathrm{i}\, \omega\, t}\, \mathrm{d}t
\right|. 
\label{eq:gEstInt}
\end{align}
As for the integral in~\eqref{eq:gEstInt}, we have 
\begin{align}
& \int_{-\infty}^{\infty}
| g_{\varepsilon}(t \pm d\, \mathrm{i}) | \, \mathrm{d}t \notag \\
& \leq M
\left(
\int_{-\infty}^{\infty}
| w^{\mathrm{sym}}_{\varepsilon}(t \pm d\, \mathrm{i}; p, q)
- w^{\mathrm{sym}}(t \pm d\, \mathrm{i}; p, q) |
\, \mathrm{d}t
+
\int_{-\infty}^{\infty}
| w^{\mathrm{sym}}(t \pm d\, \mathrm{i}; p, q) | \, \mathrm{d}t
\right),
\label{eq:int_g_1}
\end{align}
and 
\begin{align}
\int_{-\infty}^{\infty}
| w^{\mathrm{sym}}(t \pm d\, \mathrm{i}; p, q) | \, \mathrm{d}t
\leq
\left(
\frac{\sqrt{\pi}}{2} + q
\right)\, p\, \exp\{ (d / p)^{2} \}. 
\label{eq:int_g_2}
\end{align}
Then, due to~\eqref{eq:gEstInt}, \eqref{eq:int_g_1}, \eqref{eq:int_g_2} and Lemma~\ref{lem:w_e_int_zero}, 
we have
\begin{align}
| \hat{g}_{\varepsilon}(\omega) |
\leq
M
\left( 
\delta(\varepsilon) 
+
\left(
\frac{\sqrt{\pi}}{2} + q
\right)\, p\, \exp\{ (d / p)^{2} \}
\right)
\exp(-d\, | \omega |), 
\label{eq:g_hat_decay}
\end{align}
where $\lim_{\varepsilon \to +0} \delta(\varepsilon) = 0$. 
Finally, 
using~\eqref{eq:EstimByPoissonSum_eps} and~\eqref{eq:g_hat_decay}, 
for $\omega$ with $| \omega | \leq \pi / h$,
we have
\begin{align}
E_{w, \varepsilon}^{(\infty, h)}(\omega)
& \leq 
\sum_{m \neq 0} \left| \hat{g}_{\varepsilon}\left( \frac{2\pi m}{h} - \omega \right) \right|
\leq 
2\, C_{M, \varepsilon, p, q, d}
\sum_{m = 1}^{\infty} \exp \left( - \frac{\pi\, d}{h} ( 2 |m| - 1 ) \right) \\
& =
\frac{2\, C_{M, \varepsilon, p, q, d}}{1 - \exp(- 2\, \pi\, d/h)}
\exp \left( - \frac{\pi\, d}{h} \right), 
\end{align}
where
\begin{align}
C_{M, \varepsilon, p, q, d} 
=
M
\left( 
\delta(\varepsilon) 
+
\left(
\frac{\sqrt{\pi}}{2} + q
\right)\, p\, \exp\{ (d / p)^{2} \}
\right).  
\end{align}
Since $\varepsilon > 0$ is arbitrary, 
the conclusion of Lemma~\ref{lem:DiscErr} follows from~\eqref{eq:Div_E_w} 
and the above estimates.
\end{proof}

\subsection{Proofs of Lemmas~\ref{lem:w_e_analytic}--\ref{lem:w_e_sum_zero}}
\label{sec:SubLemProofs}

\begin{proof}[Proof of Lemma~\ref{lem:w_e_analytic}]
Let $\hat{w}^{\mathrm{sym}}_{\varepsilon}(\omega; p, q)$ 
be the Fourier transform of $w^{\mathrm{sym}}_{\varepsilon}(z; p, q)$.
Then, $\hat{w}^{\mathrm{sym}}_{\varepsilon}(\omega; p, q)$ is written in the form
\begin{align}
\hat{w}^{\mathrm{sym}}_{\varepsilon}(\omega; p, q)
=
\exp\left( - \frac{\varepsilon^{2}\, \omega^{2} }{2} \right)\, 
\hat{w}^{\mathrm{sym}}(\omega; p, q),
\end{align}
where $\hat{w}^{\mathrm{sym}}(\omega; p, q)$ is 
the Fourier transform of $w^{\mathrm{sym}}(z; p, q)$.
Since $\hat{w}^{\mathrm{sym}}(\omega; p, q)$ 
is uniformly bounded for any $\omega \in \mathbf{R}$, 
it follows that
$\hat{w}^{\mathrm{sym}}_{\varepsilon}(\omega; p, q) 
= \mathrm{O}(\exp(-(\varepsilon^{2} / 2)\, \omega^{2}))\ (\omega \to \pm \infty)$. 
Then, the integral
\begin{align}
\int_{-\infty}^{\infty}
-\mathrm{i}\, \omega\, 
\hat{w}^{\mathrm{sym}}_{\varepsilon}(\omega; p, q)\, 
\mathrm{e}^{-\mathrm{i}\, \omega\, z}\, \mathrm{d}\omega
\end{align}
converges absolutely for any $z \in \mathbf{C}$, 
which implies $w^{\mathrm{sym}}_{\varepsilon}(z; p, q)$ is differentiable for any $z \in \mathbf{C}$.
\end{proof}

\begin{proof}[Proof of Lemma~\ref{lem:w_e_int_zero}]
By substitution of $y = z - \varepsilon\, v$ into~\eqref{eq:w_sym_eps}, 
$w^{\mathrm{sym}}_{\varepsilon}(z; p, q)$ is rewritten in the form
\begin{align}
w^{\mathrm{sym}}_{\varepsilon}(z; p, q)
=
\int_{-\infty}^{\infty} 
u_{1}(v)\,
w^{\mathrm{sym}}(z - \varepsilon\, v; p, q)\, \mathrm{d}v,
\end{align}
where $u_{\varepsilon}$ is defined by~\eqref{eq:u_e}.
Then, we have
\begin{align}
\left| 
w^{\mathrm{sym}}_{\varepsilon}(z; p, q) - w^{\mathrm{sym}}(z; p, q)
\right|
& =
\left| 
\int_{-\infty}^{\infty} 
u_{1}(v)\,
\left( 
w^{\mathrm{sym}}(z - \varepsilon\, v; p, q) - 
w^{\mathrm{sym}}(z; p, q)
\right)
\mathrm{d}v
\right| \notag \\
& \leq 
\int_{-\infty}^{\infty} 
u_{1}(v)\,
\left|
w^{\mathrm{sym}}(z - \varepsilon\, v; p, q) - 
w^{\mathrm{sym}}(z; p, q)
\right|
\mathrm{d}v,
\label{eq:int_w_estim_key_pre}
\end{align}
and therefore
\begin{align}
& \int_{-\infty}^{\infty}
\left| 
w^{\mathrm{sym}}_{\varepsilon}(t \pm d'\, \mathrm{i}; p, q) 
- w^{\mathrm{sym}}(t \pm d'\, \mathrm{i}; p, q) 
\right|\, 
\mathrm{d}t \notag \\
& \leq 
\int_{-\infty}^{\infty} 
u_{1}(v)
\left(
\int_{-\infty}^{\infty}
\left| 
w^{\mathrm{sym}}(t - \varepsilon\, v \pm d'\, \mathrm{i}; p, q) 
- w^{\mathrm{sym}}(t \pm d'\, \mathrm{i}; p, q) 
\right|\, 
\mathrm{d}t
\right)\, \mathrm{d}v
\label{eq:int_w_estim_key}
\end{align}
for any $d'$ with $0\leq d' \leq d$. 
In the following, we estimate 
$W_{\varepsilon, v, d'}(t) = 
| w^{\mathrm{sym}}(t - \varepsilon\, v \pm d'\, \mathrm{i}; p, q) 
- w^{\mathrm{sym}}(t \pm d'\, \mathrm{i}; p, q) |$.
Due to the symmetry with respect to $v$, 
it suffices to consider the case $v\geq 0$. 
Here we note that
\begin{align}
w^{\mathrm{sym}}(z; p, q) 
& = 
\frac{\mathrm{e}^{(\mathrm{Im}\, z/p)^{2}}}{\sqrt{\pi}} 
\int_{a\, \mathrm{Re}\, z /p - q }^{\infty}
\exp\left\{ - s^{2} - 2\, \mathrm{i}\, a\, (\mathrm{Im}\, z/p) s \right\}\, \mathrm{d}s, 
\label{eq:w_estim_key}
\end{align}
where $a = 1$ if $\mathrm{Re}\, z \geq 0$ and $a = -1$ if $\mathrm{Re}\, z < 0$.
First, let $t \geq 0$.
For $t$ with $0 \leq t\leq \varepsilon\, v$, 
we have
\begin{align}
W_{\varepsilon, v, d'}(t)
& \leq 
\left| 
w^{\mathrm{sym}}(t - \varepsilon\, v \pm d'\, \mathrm{i}; p, q) 
\right|
+
\left| 
w^{\mathrm{sym}}(t \pm d'\, \mathrm{i}; p, q) 
\right| \notag \\
& \leq
\frac{\mathrm{e}^{(d'/p)^{2}}}{\sqrt{\pi}} 
\left(
\int_{(\varepsilon\, v - t) /p - q }^{\infty}
\mathrm{e}^{- s^{2}}
\, \mathrm{d}s
+
\int_{ t /p - q }^{\infty}
\mathrm{e}^{- s^{2}}
\, \mathrm{d}s
\right)
\leq 
2\, \mathrm{e}^{(d'/p)^{2}}. 
\label{eq:w_estim_aug1}
\end{align}
For $t$ with $t> \varepsilon\, v$, we have
\begin{align}
W_{\varepsilon, v, d'}(t)
\leq 
\frac{\mathrm{e}^{(d'/p)^{2}}}{\sqrt{\pi}} 
\int_{ t /p - q - \varepsilon\, v /p}^{t /p - q}
\mathrm{e}^{- s^{2}}
\, \mathrm{d}s 
\leq 
\frac{\mathrm{e}^{(d'/p)^{2}}}{\sqrt{\pi}\, p}\, \varepsilon\, v \cdot
\begin{cases}
\mathrm{e}^{- (t/p - q)^{2}} & (t < pq), \\
1 & (pq \leq t < pq + \varepsilon\, v), \\
\mathrm{e}^{- \{ t/p - (q + \varepsilon\, v / p)\}^{2}} & (pq + \varepsilon\, v \leq t ).
\end{cases} 
\label{eq:w_estim_aug2}
\end{align}
Next, for $t < 0$, we can obtain a similar estimate to \eqref{eq:w_estim_aug2} as follows:
\begin{align}
W_{\varepsilon, v, d'}(t) 
\leq 
\frac{\mathrm{e}^{(d'/p)^{2}}}{\sqrt{\pi}\, p}\, \varepsilon\, v \cdot
\begin{cases}
\mathrm{e}^{- (-t/p - q)^{2}} & (t < -pq), \\
1 & (-pq \leq t < -pq + \varepsilon\, v), \\
\mathrm{e}^{- \{ -t/p - (q - \varepsilon\, v / p)\}^{2}} & (-pq + \varepsilon\, v \leq t ).
\end{cases}
\label{eq:w_estim_aug3}
\end{align}
Combining \eqref{eq:w_estim_aug1}, \eqref{eq:w_estim_aug2} and~\eqref{eq:w_estim_aug3}, 
we have
\begin{align}
\int_{-\infty}^{\infty}
W_{\varepsilon, v, d'}(t)\,  
\mathrm{d}t 
\leq 
\mathrm{e}^{(d'/p)^{2}}\, \varepsilon\, v 
\left\{
2 + 
2 \left(
1 + \frac{\varepsilon\, v }{\sqrt{\pi}\, p} + 1
\right)
\right\}
=
\mathrm{e}^{(d'/p)^{2}}\, \varepsilon\, v 
\left(
6 + 
\frac{2\, \varepsilon\, v }{\sqrt{\pi}\, p}
\right).
\label{eq:w_estim_aug4}
\end{align}
Finally, using~\eqref{eq:int_w_estim_key} and~\eqref{eq:w_estim_aug4}
for $d' = d$, we obtain
\begin{align}
& \int_{-\infty}^{\infty}
\left| 
w^{\mathrm{sym}}_{\varepsilon}(t \pm d\, \mathrm{i}; p, q) 
- w^{\mathrm{sym}}(t \pm d\, \mathrm{i}; p, q) 
\right|\, 
\mathrm{d}t \notag \\
& \leq 
\mathrm{e}^{(d/p)^{2}}
\left(
6\, \varepsilon
\int_{-\infty}^{\infty} \, |v|\, u_{1}(v)\, \mathrm{d}v
+ 
\frac{2}{\sqrt{\pi}\, p}\, \varepsilon^{2}\, 
\int_{-\infty}^{\infty} \, |v|^{2}\, u_{1}(v)\, \mathrm{d}v
\right)
\to 0 \quad (\varepsilon \to +0), \notag
\end{align}
the conclusion of Lemma~\ref{lem:w_e_int_zero}.
\end{proof}

\begin{proof}[Proof of Lemma~\ref{lem:w_e_sum_zero}]
We use the estimates~\eqref{eq:w_estim_aug2} and~\eqref{eq:w_estim_aug3} for $d' = 0$
in the proof of Lemma~\ref{lem:w_e_int_zero}.
In addition, we modify~\eqref{eq:w_estim_aug1} 
for $t$ with $0 \leq t\leq \varepsilon\, v$ and $d' = 0$ as follows: 
\begin{align}
& \left| 
w^{\mathrm{sym}}(t - \varepsilon\, v; p, q) 
- w^{\mathrm{sym}}(t ; p, q) 
\right| 
=
\frac{1}{\sqrt{\pi}} 
\left|
\int_{(\varepsilon\, v - t) /p - q }^{t /p - q}
\exp\left( - s^{2} \right)\, \mathrm{d}s
\right|
\leq 
\frac{\varepsilon\, v}{\sqrt{\pi}\, p}.
\label{eq:w_estim_aug1_real}
\end{align}
Combining the 
estimates
\eqref{eq:w_estim_aug2}, \eqref{eq:w_estim_aug3} and \eqref{eq:w_estim_aug1_real}, 
for $v \geq 0$ and $t\in \mathbf{R}$, we have
\begin{align}
& \left| 
w^{\mathrm{sym}}(t - \varepsilon\, v; p, q) 
- w^{\mathrm{sym}}(t ; p, q) 
\right| 
\leq 
\frac{\varepsilon\, v}{\sqrt{\pi}\, p} \cdot 
\begin{cases}
\mathrm{e}^{- (-t/p - q)^{2}} & (t < -pq), \\
1 & (-pq \leq t < pq + \varepsilon\, v), \\
\mathrm{e}^{- \{ t/p - (q + \varepsilon\, v / p)\}^{2}} & (pq + \varepsilon\, v \leq t ),
\end{cases}
\end{align}
and therefore
\begin{align}
& h \sum_{n = -\infty}^{\infty} 
\left| 
w^{\mathrm{sym}}(nh - \varepsilon\, v; p, q) 
- w^{\mathrm{sym}}(nh ; p, q) 
\right| \notag \\
& \leq 
\frac{\varepsilon\, v}{\sqrt{\pi}\, p}\, h
\left(
\sum_{n \leq -1-pq/h} \mathrm{e}^{- (-nh/p - q)^{2}}
+
\sum_{
\begin{subarray}{c}
-1-pq/h < n \\
< 1 + (pq + \varepsilon\, v)/h
\end{subarray}
} 1
+ 
\sum_{1 + (pq + \varepsilon\, v)/h \leq n}
\mathrm{e}^{- \{ nh/p - (q + \varepsilon\, v / p)\}^{2}}
\right) \notag \\
& \leq 
\frac{\varepsilon\, v}{\sqrt{\pi}\, p}\, 
\left\{
\int_{-\infty}^{\infty} \mathrm{e}^{-(s/p)^{2}}\, \mathrm{d}s
+
(2h + 2pq + \varepsilon\, v)
\right\}
=
\frac{\varepsilon\, v}{\sqrt{\pi}\, p}\, 
\left\{
\sqrt{\pi}\, p
+
(2h + 2pq + \varepsilon\, v)
\right\}.
\label{eq:sum_estim_pre}
\end{align}
Then, using~\eqref{eq:int_w_estim_key_pre}, \eqref{eq:sum_estim_pre}
and the symmetry with respect to $v$, 
we have
\begin{align}
h \sum_{n = -\infty}^{\infty}
\left| 
w^{\mathrm{sym}}_{\varepsilon}(n h ; p, q) 
- w^{\mathrm{sym}}(n h ; p, q) 
\right|
& \leq 
\varepsilon
\int_{-\infty}^{\infty} 
\frac{|v|\, u_{1}(v)}{\sqrt{\pi}\, p}\, 
\left\{
\sqrt{\pi}\, p
+
(2h + 2pq + \varepsilon\, |v|)
\right\} 
\to 0 \ (\varepsilon \to +0), 
\end{align}
the conclusion of Lemma~\ref{lem:w_e_sum_zero}.
\end{proof}

\section{Concluding Remarks}
\label{sec:Concl}

In this paper, 
we considered error control of the formula~\eqref{eq:TargetComputation} with 
continuous Euler transform $w$ for the Fourier transform $F(\omega)$ of slowly decaying functions.
Based on the error estimate of the formula shown by Lemmas~\ref{lem:E_w}--\ref{lem:TrunErr}, 
we presented Theorem~\ref{thm:FormulaParaErr}
showing appropriate setting of the parameters in the formula 
to compute approximate values of the Fourier transform 
with given accuracy in given ranges of the frequency $\omega$.
Furthermore, 
combining the formula and the fractional FFT, 
we showed that the computation can be done 
in the same order of computation time as the FFT.
Improvement of the estimate of Theorem~\ref{thm:FormulaParaErr}
and application of the formula to parabolic partial differential equations, etc. 
may be the subject of future papers.

\section*{Acknowledgment}

The author gives special thanks to Dr.~Takuya Ooura for his valuable comments on this work. 
Moreover, the author would like to thank Prof.~Masaaki Sugihara 
for his cooperation on this work.
This work is supported by JSPS KAKENHI Grant Number 24760064.

\end{document}